\def\btimes        {\times} 
\def\CbC           {\ensuremath{\hspace{1.1pt}\overline{\mathcal C}\hspace{.6pt}{\boxtimes}%
                    \hspace{1.8pt}\mathcal C}\xspace}
\def\CopC          {{\ensuremath{\mathcal C^{\mathrm{op}}\hspace*{.8pt}{\times}\hspace*{1.5pt}\mathcal C}}}
\def\DopD          {{\ensuremath{\mathcal D^{\mathrm{op}}\hspace*{.8pt}{\times}\hspace*{1.5pt}\mathcal D}}}
\def\End           {{\ensuremath{\mathrm{End}}}}
\def\Hom           {{\ensuremath{\mathrm{Hom}}}}
\def\HomC          {{\ensuremath{\mathrm{Hom}_{\mathcal C}}}}
\def\id            {\mbox{\sl id}} 
\def\idsm          {\mbox{\footnotesize\sl id}}
\def\idxv          {\ensuremath{\id_{x^\vee_{}}}}
\def\idxvx         {\ensuremath{\id_{x^\vee_{}\otimes x}}}
\def\idxvxvv       {\ensuremath{\id_{x^\vee_{}\otimes x^{\vee\vee}_{}}}}
\def\one           {{\bf1}}
\def\oti           {\,{\otimes}\,}
\def\resizeleftmargin{\def\leftmargini{1.04em}~\\[-1.21em]}
\def\Times         {\,{\times}\,}
\def\Vect          {\ensuremath{\mathcal V\hspace*{-1pt}\mbox{\sl ect}}}
\newtheorem{thm}{Theorem}
\newtheorem{lemma}[thm]{Lemma}
\newtheorem{prop}[thm]{Proposition}
\newtheorem{coro}[thm]{Corollary}
\theoremstyle{definition}
\newtheorem{defi}[thm]{Definition}
\newtheorem{rem}[thm]{Remark}
\begin{document}

\thispagestyle{empty} 
\begin{flushright}
   {\sf ZMP-HH/16-6}\\
   {\sf Hamburger$\;$Beitr\"age$\;$zur$\;$Mathematik$\;$Nr.$\;$589}\\[2mm]
   April 2016
\end{flushright}
   \bigskip

         \vskip 2.0em
\begin{center}\Large
{\bf COENDS IN CONFORMAL FIELD THEORY}
\end{center}\vskip 2.2em
\begin{center}
  J\"urgen Fuchs $^{a}$
  ~~and~~ Christoph Schweigert $^b$
\end{center}

\vskip 9mm

\begin{center}
  $^a$ Teoretisk fysik, \ Karlstads Universitet
  \\Universitetsgatan 21, \ S\,--\,\,651\,88 Karlstad
 \\[7pt]
  $^b$ Fachbereich Mathematik, \ Universit\"at Hamburg\\
  Bereich Algebra und Zahlentheorie\\
  Bundesstra\ss e 55, \ D\,--\,20\,146\, Hamburg
\end{center}

\vskip 3em

   \bigskip

\noindent{\sc Abstract}
\\[3pt]
The idea of ``summing over all intermediate states'' that is central for implementing
locality in quantum systems can be realized by coend constructions. In the concrete 
case of systems of conformal blocks for a certain class
of conformal vertex algebras, one deals with coends in functor categories. Working with
these coends involves quite a few subtleties which, even though they have in principle 
already been understood twenty years ago \cite{lyub6,lyub11}, have not been 
sufficiently appreciated by the conformal field theory community.

  \newpage


\section{Coends in mathematics and physics} 

In this note we discuss the use of coends in conformal field theories.
The notion of a coend is a standard concept in category theory; we review it briefly.
In the context of conformal field theory, the categories in which we consider coends
are related to representation categories of conformal vertex algebras. Conformal 
vertex algebras give rise to bundles of conformal blocks. These constitute the 
building blocks for correlators in conformal field theories. Moreover, they 
have a rich mathematical structure with deep links to many other fields of mathematics,
including representation theory and algebraic geometry. Our main goal in this note
is to provide pertinent categorical tools for the description of (aspects of) conformal
blocks and for constructions involving them. Both this description and the tools have 
been known for more than twenty years \cite{lyub6,lyub11}.  However, in our opinion,
Lyubashenko's theory has not been sufficiently appreciated. The present contribution
will hopefully make some aspects of them more easily accessible.

\subsection{Definition and examples}

A \emph{coend} is a colimit of a functor $G\colon \CopC \,{\to}\, \mathcal D$ -- an object $D$ of 
$\mathcal D$ equipped with a universal dinatural transformation from $G$ to $D$. One denotes 
the coend of $G\colon \CopC \,{\to}\, \mathcal D$, and also the underlying object $D$, by 
$\int^{c\in\mathcal C \!} G(c,c)$. There is also the obvious dual notion of an \emph{end}
as a limit over $G$, denoted by $\int_{c\in\mathcal C} G(c,c)$.

If the coend of a functor exists, then it is unique up to unique isomorphism. If $\mathcal D$
is co\-com\-plete, then the coend does exist; it can then be obtained
as the coequalizer of the mor\-phisms
  \begin{equation}
  \coprod_{f\in\HomC(c',c'')} \! \! \! G(c',c'')
  \begin{array}c ~\\[-11pt] 
  \xrightarrow{\,~s~~} \\[-7pt] \xrightarrow[\,~t~~]{\phantom{~s~}} \end{array}
  \coprod_{c\in\,\mathcal C} G(c,c)
  \label{coequrepn}
  \end{equation}
whose values at the $f$th summand are $s_f \,{=}\, G(f,\id)$ and
$t_f \,{=}\, G(\id,f)$, respectively (see e.g.\ \cite[Sect.\,V.1]{MAy4}).
In short, the coend of $G$ is a quotient of $\coprod G(c,c)$ on which 
the natural left and right actions of $G$ on any morphism $f$ in $\mathcal C$ coincide. The dinatural 
transformation is induced from the canonical maps of the coproduct and the quotient.

\medskip

(Co)limits, and thus (co)ends, are an enormously useful tool. 
Let us illustrate this with a few examples:
\resizeleftmargin \begin{itemize}
 
\item
Let $\mathcal C \,{=}\, \mathcal D=\Vect$ be the category of finite-dimensional vector 
spaces over a field $\Bbbk$, and let $G\colon \CopC \,{\to}\, \mathcal D$ be the inner Hom, 
i.e.\ $G(V,W) \,{:=}\, \Hom(V,W)$. Then the coend is the ground field $\Bbbk$ and the 
dinatural transformation $G(V,V) \,{=}\, \End(V)\,{\to}\, \Bbbk$ is given by the trace. This 
encodes simultaneously the cyclicity of the trace and a universal property of the trace.

\item
More generally, if $\mathcal C \,{=}\, \mathcal D$ is the category $H$-mod of 
fini\-te-dimensional modules over a finite-dimensional Hopf algebra $H$, then the coend of the
inner Hom is the coadjoint $H$-module $H^*_{\rm coad}$ with $M^*\oti M\,{\to}\, H^*$ given by
$\overline m\oti m \,{\mapsto}\, (h\,{\mapsto}\, \langle \overline m,hm\rangle)$.
In case $H$ is semisimple, the coadjoint module decomposes into a direct sum involving
simple $H$-modules according to
  $$ 
  H^*_{\rm coad} = \bigoplus_{S_i\in\pi_0(H\mbox{\footnotesize -}\mathrm{mod})}
  S_i^\vee\otimes S_i^{} \,.
  $$
Actually, the coend 
  \begin{equation}
  \label{eq:defL}
  L := \int^{c\in\mathcal C}\! c^\vee \oti c
  \end{equation}
of the inner Hom functor exists for any finite tensor category $\mathcal C$,
and indeed it can be seen as a generalization of the coadjoint module
(e.g.\ \cite{shimi9} $\Hom_{\mathcal C}(L,\one)$ and $\Hom_{\mathcal C}(\one,L)$ 
are analogues of the center of $H$ and of the space of class functions, respectively).

\item
The geometric realization of a simplicial set $F\colon \Delta^{\rm op} \,{\to}\,\mathrm{Set}$
can be regarded as a coend 
  $$
  \int^{n\in\Delta}\! F(n)\times \sigma(n) \,,
  $$
where $\sigma(n)$ is the standard $n$-simplex in $\mathbb{R}^{n+1}$
and the set $F(n)$ is endowed with the discrete topology. This example may be seen
as an archetype of a coend that implements a gluing modulo relations.

\item
Let $R$ be an associative unital ring and $\mathcal C \,{=}\, {*}\,/\!/R$ be the 
category with a single object $*$ and endomorphisms given by $R$. Then an
additive functor $F_M\colon \mathcal C \,{\to}\, \mathcal Ab$ 
with values in abelian groups amounts to a left $R$-module $N$, while an additive functor
$G_N \colon \mathcal C^{\mathrm{op}}\,{\to}\, \mathcal Ab$ amounts to a right $R$-module $N$. 
The coend $\int G_N\,{\times}\, F_M \,{\in}\, \mathcal Ab$ is then the relative 
tensor product $N \,{\otimes_R}\, M$.

\item
Provided that $\mathcal C$ is essentially small, the set of natural transformations 
between any two functors $F,G\colon \mathcal C \,{\to}\, \mathcal D$ can be expressed as an end:
  $$
  \mathrm{Nat}(F,G) = \int_{\!c \in \mathcal C} \Hom_{\mathcal D}(F(c),G(c)) \,.
  $$

\end{itemize}

{}From the second example in this list, we see that (co)ends supply a tool for constructing 
objects, in fact objects with additional structure. That structure embraces global 
information about the underlying category $\mathcal C$ and the functor $G$. Specifically,
if $\mathcal C$ is the representation category of a conformal vertex algebra whose 
representation category is a finite braided tensor category, then the coend \eqref{eq:defL}
has the additional structure of a Hopf algebra in $\mathcal C$ \cite{lyub3}. As we 
will recall in this contribution, this Hopf algebra plays an important role in one
convenient description of conformal blocks for higher genus Riemann surfaces.

A natural transformation $F \,{\to}\, G$ induces
morphisms of coends. In this way, coends yield functors; prominent examples
are the functor of geometric realization from simplicial sets to topological 
spaces and the functor of the relative tensor product.

\medskip

In many situations of interest the functor whose coend is considered depends on 
``parameters'', i.e.\ is a functor $G\colon \mathcal E\,{\times}\, \CopC \,{\to}\,
\mathcal D$ \cite[Ch.\,IX.7]{MAcl}. Coends can then be used as a tool to construct
functors $\mathcal E \,{\to}\, \mathcal D$.  Again we give a few examples:
\resizeleftmargin \begin{itemize}

\item
Assume that for two functors $H\colon \mathcal X \,{\to}\, \mathcal A$ and 
$K\colon \mathcal X \,{\to}\, \mathcal C$ the copowers $\HomC(K(x),c)\,{\cdot}\,H(x')$ 
exist in $\mathcal A$ for all objects $c\,{\in}\,\mathcal C$
and all objects $x,x'\,{\in}\,\mathcal X$. Then the existence 
of the coend $\int^{x\in\mathcal X \!}\HomC(K(x),c)\,{\cdot}\,H(x)$ for every $c\,{\in}\,\mathcal C$ 
implies \cite[Sect.\,X.4]{MAcl} that $H$ has a left Kan extension $\mathrm{Lan}_K H$ along $K$, 
and then $\mathrm{Lan}_K H(c)$ is given by this coend (and similarly with right Kan extensions
and ends). This covers a wide range of applications since, morally, every fundamental 
concept of category theory is a Kan extension \cite[Sect.\,X.7]{MAcl}.

\item
Let $\mathcal C$ be a rigid monoidal category, and assume that the coend
  $$
  Z(c) := \int^{b\,{\in}\,\mathcal C} \! b^\vee \oti c \oti b
  $$
(with $c^\vee$ the object dual to $c\,{\in}\,\mathcal C$) exists for all $c\,{\in}\,\mathcal C$. The
assignment $c \,{\mapsto}\, Z(c)$ then defines an endofunctor $Z$ of $\mathcal C$ that has a 
natural structure of a monad on $\mathcal C$, i.e.\ an algebra in the category of 
endofunctors. Moreover, $Z$ is even a quasitriangular Hopf monad, and the category
of $Z$-modules is braided isomorphic to the Drinfeld center $\mathcal Z(\mathcal C)$
of $\mathcal C$ \cite{daSt5,brVi5}.
In this way, we can think about an additional structure like the braiding or, more
generally, of a balancing on a bimodule category, as the structure of a module 
over the monad $Z$. Moreover, this
point of view allows for non-trivial calculations, see e.g.\ \cite{brVi5}.
\end{itemize}


\subsection{Coends and quantum field theory}

We now explain how coends realize the idea of a \emph{sum over a complete set of 
intermediate states}, which is central for implementing locality in quantum physics.
We assume that the states are organized in terms of representations of some symmetry 
structure. For concreteness the reader may think of the situation that
chiral symmetries of a conformal field theory are encoded by a conformal vertex algebra,
so that the states are vectors in representations of the vertex algebra.  

For making the idea precise, we need in particular to specify what we mean by ``all'' states.
Summing over states in isomorphic representations would clearly be an over-parametrization.
But in fact \emph{all} morphisms, not only isomorphisms, must be taken into account.
And this is precisely afforded by taking a coend, as is seen from
its description \eqref{coequrepn} as a coequalizer in which morphisms are modded out.

We now consider concretely a class of quantum field theories for which important 
statements can be obtained with the help of categorical notions: two-di\-mensional
conformal field theories (CFTs) based on vertex algebras whose representation category 
$\mathcal C$ is braided monoidal and has certain finiteness properties and admits 
dualities. This class includes in particular all rational CFTs, for which $\mathcal C$ is a 
semisimple modular tensor category \cite{huan24},
but the use of coends allows us to treat non-se\-mi\-simple theories as well. 
(Because of the analytic properties of their conformal blocks
such theories go under the name of ``logarithmic'' conformal field theories.)
 
An important feature of a conformal vertex algebra is the fact that it gives
rise to system of conformal blocks, to which we refer as 
a chiral conformal field theory. The properties of these conformal blocks are,
for our purposes, conveniently encoded in a family of functors: given
a Riemann surface $\varSigma$ of genus $g$ with $p$ incoming and $q$ outgoing
punctures (in this context also called a world sheet), we have a functor
  $$
 \mathrm{Bl}(\varSigma) :~  \mathcal C^{\btimes (p+q)} \to  \Vect
  $$
which is covariant in the $q$ outgoing arguments and contravariant
in the $p$ incoming arguments. 
This system of functors comes with much additional structure, leading e.g.\ to 
actions of the mapping class groups Map$(\varSigma)$. 

In particular, also functors for different surfaces $\varSigma$ are related to one another.
It is indeed an old idea in CFT \cite{sono2,lewe3} that a pair-of-pants decomposition 
of the world sheet $\varSigma$ allows one to recover the conformal blocks from 
information about those for a small number of elementary world sheets through a suitable
sewing procedure. This sewing requires to perform an adequate sum over intermediate 
states. For those elementary world sheets the functors $\mathrm{Bl}(\varSigma)$ should,
in turn, be expressible in terms of Hom functors and of the tensor product of the 
representation category $\mathcal C$. The sum over intermediate states should be implemented 
by the fact that the functors $\mathrm{Bl}(\varSigma)$ for general world sheets are coends
of these functors. Sewing then amounts to considering coends in suitable functor categories 
$\mathcal F\hspace*{-2pt}un(\mathcal C^{\btimes q} \,{\btimes}\, (\mathcal C^{\mathrm{op}})^{\btimes p},\Vect)$.
We require that these functors are representable, so as to ensure an interpretation of 
the conformal blocks in terms of states. This leads us to consider coends within categories
of \emph{left exact} functors. As we will see, this also has many advantages when
manipulating the coends, see in particular Proposition \ref{prop:intHom-vs-L} below.

Remarkably enough, this program has been realized already more than twenty years ago
\cite{lyub6,lyub11} for categories that are modular (and thus in particular rigid), but 
not necessarily semisimple. A key role in this program is played by the coend  
$L \,{=}\, \int^{c\in\mathcal C}\! c^\vee \oti c$
of the inner Hom functor introduced in \eqref{eq:defL} above.
Specifically, if $\mathcal C$ is a finite tensor category, then this
coend $L$ exists as a coalgebra in $\mathcal C$, and if $\mathcal C$ is in addition braided,
then $L$ carries a natural structure of a Hopf algebra endowed with a non-zero left
integral and a Hopf pairing. This is an instance of the paradigm noted above that
a coend captures global properties of a category.

We adopt the terminology of \cite{KEly} to call a finite ribbon category \emph{modular}
iff the Hopf pairing on $L$ is non-de\-ge\-nerate. Then a semisimple modular finite 
ribbon category is the same as a (semisimple) modular tensor category.

\begin{rem}
Going beyond \cite{lyub6,lyub11}, which is concerned with chiral theories,
we are actually also interested in \emph{full} local -- as opposed to chiral -- CFT.
The relevant category over which a coend is to be taken is then the enveloping category
$\CbC$ of $\mathcal C$. However, $\CbC$ inherits from $\mathcal C$ all properties of interest
to us, such as being $\Bbbk$-linear, having a ribbon structure, or being finite.
As a consequence, the structures and results discussed in this note are directly relevant
both for describing chiral CFTs and for constructing full CFTs \cite{fuSc23}.
\end{rem}

\begin{rem}
Modularity of $\mathcal C$ in the sense used here is equivalent \cite{shimi10} to 
\emph{factorizability} of $\mathcal C$, i.e.\ to the functor 
$\CbC \,{\to}\, \mathcal Z(\mathcal C)$ from the enveloping category of $\mathcal C$
to its Drinfeld center that sends $u\,{\boxtimes}\,v$ to $u\,{\otimes}\,v$ with half-braiding
$ (c^{}_{u,-} \,{\otimes}\, \id_v^{}) \,{\circ}\, (\id_u^{} \,{\otimes}\, c^{-1}_{-,v}) $
being an equivalence. (In \cite{shimi10} also further equivalences are established.)
\end{rem}

Taken together, the coend constructions alluded to above allow one to express
the conformal blocks for a connected world sheet $\varSigma$ of genus $g$ 
as the $\Bbbk$-linear symmetric monoidal functor \cite{lyub11,lyub8}
  \begin{equation}
  \label{eq:genblock}
  \begin{array}{rrcl}
  \mathrm{Bl}(\varSigma) :~ & \mathcal C^{\btimes (p+q)} & \!{\longrightarrow}\! & \Vect
  \\{}\\[-10pt]
  & \mbox{${\huge\btimes}_\alpha$}\, u_\alpha^{} & \!{\longmapsto}\! &
  \Hom_{\mathcal C}\big( \one,
  \mbox{$\bigotimes_\alpha$}\, u_\alpha^\varepsilon \oti L^{\otimes g} \big)
  \end{array}
  \end{equation}
where $L$ is the Hopf algebra \eqref{eq:defL} 
and $u_\alpha^\varepsilon \,{=}\, u_\alpha^{}$ for an outgoing insertion while
$u_\alpha^\varepsilon \,{=}\, u_\alpha^\vee$ for an incoming insertion.
In the remainder of this note we provide various pertinent information about coends, and 
in particular results that are instrumental in arriving at the formula \eqref{eq:genblock}.
The main statements we will present are already available in the literature.
Besides including them for the sake of completeness, we also provide detailed proofs.


\section{Some facts about specific coends} 

For a closed monoidal category, the inner Hom functor can be defined. For a rigid
monoidal category $\mathcal D$, it can be explicitly realized as $G\colon \DopD \to \mathcal D$ 
with $G\colon (c,d) \,{\mapsto}\, c^\vee \oti d$. Now recall
that a pivotal structure on a category $\mathcal D$ with duality $-^\vee$ is a monoidal
natural isomorphism $\pi\colon \mbox{\sl Id}_{\mathcal D} \,\,{\Rightarrow}\, -^{\vee\vee}$
between the identity functor of $\mathcal D$ and the double dual functor; we write the
components of $\pi$ as $\pi_x \,{\in}\, \Hom_{\mathcal D}(x,x^{\vee\vee})$. For the purposes
of conformal field theory, it is worth mentioning that a ribbon category comes
with a pivotal structure. 

\begin{lemma}\label{lem:end-coend}
Let $\mathcal D$ be a pivotal monoidal category and let $D \,{\in}\, \mathcal D$ be the 
object underlying the coend $\int^{x\in\mathcal D}\! G(x,x)$ of the inner Hom functor.
Then any choice of non-degenerate pairing $\varpi \,{\in}\, \Hom_{\mathcal D}(D\oti D,\one)$
induces on $D$ a structure of an end for the inner Hom. {\rm(}And vice versa, given a 
structure of an end on an object $E \,{\in}\, \mathcal D$, $\varpi$ induces on $E$ 
a structure of a coend.{\rm)}
\end{lemma}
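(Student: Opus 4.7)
The plan is to exploit that in a pivotal (and hence rigid) monoidal category, the duality functor $(-)^\vee\colon \mathcal D \to \mathcal D^{\mathrm{op}}$ is an equivalence, so it converts the coend of $G$ into an end of a ``dualized'' version of $G$; the pivotal structure identifies that dualized version with $G$ itself, and the non-degenerate pairing $\varpi$ transports the resulting end structure from $D^\vee$ back onto $D$.

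Concretely, first I would dualize the universal dinatural family $\iota_x\colon x^\vee \oti x \to D$ of the coend to obtain $\iota_x^\vee\colon D^\vee \to (x^\vee \oti x)^\vee$. Composing with the canonical iso $(x^\vee \oti x)^\vee \cong x^\vee \oti x^{\vee\vee}$ and then with $\id_{x^\vee} \oti \pi_x^{-1}$ yields a candidate end family $\epsilon_x\colon D^\vee \to x^\vee \oti x$. Next I would verify that $\{\epsilon_x\}$ is dinatural in the sense required for an end of $G$. The key point is that, under the pivotal identifications, the dual of $G(f,\id)$ becomes $G(\id,f)$ and vice versa, so the coend dinaturality $\iota_y \circ G(f,\id) = \iota_x \circ G(\id,f)$ dualizes to precisely the end dinaturality $G(\id,f) \circ \epsilon_x = G(f,\id) \circ \epsilon_y$. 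Universality of $\{\epsilon_x\}$ then follows because $(-)^\vee$ is an anti-equivalence of categories and hence preserves the universal property characterizing $D$.

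To finish, the non-degenerate pairing $\varpi\colon D \oti D \to \one$ corresponds by rigidity to an isomorphism $\hat\varpi\colon D \to D^\vee$; pulling back along $\hat\varpi$ endows $D$ itself with a universal dinatural family $\epsilon_x \circ \hat\varpi \colon D \to x^\vee \oti x$, i.e.\ with a structure of an end for $G$. The converse direction (an end equipped with a non-degenerate pairing yields a coend on the same object) is entirely symmetric, obtained by exchanging the roles of $\iota_x$ and $\epsilon_x$ and of $(-)^\vee$ with a chosen quasi-inverse.

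The main obstacle I anticipate is the bookkeeping lemma identifying $G(f,\id)^\vee$ with $G(\id,f)$, and $G(\id,f)^\vee$ with $G(f,\id)$, under the pivotal isomorphisms. This is a diagram chase that relies on the monoidal naturality of $\pi_x\colon x \to x^{\vee\vee}$ together with the canonical isos $(a \oti b)^\vee \cong b^\vee \oti a^\vee$; once this compatibility is in hand, the passage from coend to end and back, as well as the universality, is essentially formal, and the pairing $\varpi$ enters only at the last, transport-of-structure step.
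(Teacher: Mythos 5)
Your argument is correct, and it arrives at the same structural morphisms as the paper, but it is organized differently. The paper never introduces $D^\vee$ as an intermediate object: it writes down the end family $\jmath_x\colon D\to x^\vee\otimes x$ in closed form (a single composite built from $\varpi$, the coevaluation of $x^\vee\otimes x$, the coend family $\imath_x$ and $\pi_x^{-1}$ --- which is exactly what your composite $\epsilon_x\circ\hat\varpi$ unfolds to after a zig-zag identity), and then verifies universality by hand: a competing wedge $\tilde\jmath\colon y\to G$ is converted into a cowedge $\tilde\imath\colon G\to y^\vee$, the universal property of the coend produces $\kappa\colon D\to y^\vee$, and the copairing $\overline\varpi$ together with $\pi_y$ transposes $\kappa$ back to the required factorization $y\to D$, with uniqueness obtained by running the construction in reverse. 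You instead factor the proof conceptually: the contravariant equivalence $(-)^\vee$ sends the universal cowedge for $G$ to a universal wedge for $H(x,y):=G(y,x)^\vee$ on $D^\vee$, the pivotal structure supplies a natural isomorphism $H\cong G$ (your ``bookkeeping lemma'', which is indeed the only place where pivotality, as opposed to mere rigidity, is essential), and $\varpi$ enters only at the end as a transport of structure along $\hat\varpi\colon D\xrightarrow{\;\cong\;}D^\vee$. Your packaging buys a cleaner universality argument --- equivalences preserve universal properties, isomorphisms transport them --- and makes transparent why both a pivotal structure and a non-degenerate pairing are needed; the paper's version buys explicit formulas for all the morphisms involved, which is what one actually uses downstream. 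Do make sure, when you write this up, that the natural isomorphism $H\cong G$ is checked to be compatible with the variance conventions (your identification of $G(f,\id)^\vee$ with $G(\id,f)$ under the pivotal isos is exactly the right statement, and it does follow from naturality of $\pi$ and of $(a\otimes b)^\vee\cong b^\vee\otimes a^\vee$), since that is where an error would otherwise hide.
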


\begin{proof}
Denote by $d$ and $b$ the evaluation and coevaluation for the duality of $\mathcal D$
and by $\imath$ with $\imath_x \,{\in}\, \Hom_{\mathcal D}(x^\vee{\otimes} x,D)$ the dinatural
transformation for the coend $D \,{=}\, \int^{x\in\mathcal D}\! G(x,x)$.
Define a family $\jmath$ of morphisms by
  $$
  \jmath_x := (\varpi \oti \idxv \oti \pi_x^{-1}) \circ
  (\id_D \oti \imath_x \oti \idxvxvv) \circ (\id_D \oti b_{x^\vee_{}\otimes x})
  ~\in \Hom_{\mathcal D}(D,x^\vee \oti x) \,.
  $$
By naturality of the duality and the pivotal structure and dinaturality of $\imath$,
also the family $\jmath$ is dinatural.
\\
Now assume that $\tilde\jmath$ is another dinatural transformation from some object
$y$ of $\mathcal D$ to the functor $G$. Then by an analogous argument as for $\jmath$, the 
family $\tilde\imath$ with
  $$
  \begin{array}{rl}
  \tilde\imath_x := (\id_{y^\vee_{}} \oti d_{x^\vee_{}\otimes x}) \circ
  (\id_{y^\vee_{}} \oti \idxv \oti \pi_x^{-1} \oti \idxvx)
  \\{}\\[-8pt] 
  \circ\, (\id_{y^\vee_{}} \oti [ \tilde\jmath_x \,{\circ}\, \pi_y^{-1} ] \oti \idxvx) \circ 
  (b_{y^\vee_{}} \oti \idxvx) & \in \Hom_{\mathcal D}(x^\vee \oti x,y^\vee)
  \end{array}
  $$
furnishes a dinatural transformation from $G$ to $y^\vee$. By the universal property 
of the coend $(D,\imath)$ there thus exists a unique morphism $\kappa \,{\in}\,
\Hom_{\mathcal D}(D,y^\vee)$ such that $\tilde\jmath \,{=}\, \kappa \,{\circ}\, \imath$.
Combining with the previous results, this implies that the morphism
  $$
  \lambda := (d_{y^\vee_{}} \oti \id_D) \circ (\pi_y \oti \kappa \oti \id_D)
  \circ (\id_y \oti \overline\varpi) ~\in \Hom_{\mathcal D}(y,D) \,,
  $$
with $\overline\varpi$ the non-degenerate copairing that is side-inverse to $\varpi$,
satisfies the property that is required for the universality of $(D,\jmath)$ as an
end of the functor $G$.
Finally, to see that $\lambda$ is unique with this property, assume that there exists
a morphism $\tilde\lambda$ with the same property. Inverting the arguments above then 
shows that there is another $\tilde\kappa \,{\in}\, \Hom_{\mathcal D}(D,y^\vee)$ with the same property
as $\kappa$. Universality of the coend implies that $\tilde\kappa \,{=}\, \kappa$, which in
turn gives $\tilde\lambda \,{=}\, \lambda$.
\end{proof}

In conformal field theory, an example to which Lemma \ref{lem:end-coend} applies
is the coend $L \,{\in}\, \mathcal C$ defined in \eqref{eq:defL} in case that $\mathcal C$
is modular. The required non-degenerate pairing is then a Hopf pairing on
the Hopf algebra $L$. Another situation of interest is that the object $D$ carries 
a structure of a Frobenius algebra. It is known \cite{fuSs3,fuSs6,fuSc23}
that the space of bulk fields of a CFT must admit a structure of a commutative
Frobenius algebra in the enveloping category \CbC, and candidates for such Frobenius 
algebras are given \cite{fuSs6,fuSs5} by the coends $\int^{c\in\mathcal C}\! 
c^\vee \,{\boxtimes}\, \omega(c)$ with $\omega$ a braided auto-equivalence of $\mathcal C$.
(For the existence of Frobenius structures on such objects, see \cite[Thm.\,6.1]{shimi7}.)

\medskip

When dealing with the morphism spaces that arise in the construction of 
conformal blocks, a non-degenerate pairing as required in Lemma \ref{lem:end-coend} 
does not exist, in general. To obtain the desired results we then 
work with coends rather than with ends. This allows us to invoke
the following result, which is a reformulation of Lemma B.1 of \cite{lyub11} and
for which there is no counterpart when using ends. 

\begin{prop}\label{prop:deltaX}
Let $\mathcal D$ be a $\Bbbk$-linear category and $G\colon \mathcal D \,{\to}\, \Vect$ 
a $\Bbbk$-linear functor. For any object $b\,{\in}\,\mathcal D$, the coend of the functor
  $$ 
  G(-) \,{\otimes_{\Bbbk}}\, \Hom_{\mathcal D}(-,b) :\quad
  \mathcal D \Times \mathcal D^{\mathrm{op}} \to \Vect
  $$ 
can be realized as the vector space $G(b)$ with the family of linear maps
  $$ 
  i_u: \quad G(u) \,{\otimes_{\Bbbk}}\, \Hom_{\mathcal D}(u,b) \, \ni\, w \oti f
  \,\longmapsto\, G(f).w \,\,{\in}\, \, G(b) 
  $$
for $u \,{\in}\, \mathcal D$ as a dinatural transformation. In particular, the coend exists.
We write
  $$
  \int^{d\in\mathcal D}\! G(d) \,{\otimes_{\Bbbk}}\, \Hom_{\mathcal D}(d,b) \,\cong\, G(b) \,.
  $$
\end{prop}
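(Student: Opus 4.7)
The statement is a linear version of the (co-)Yoneda (or density) lemma, so the plan is to verify the universal property of $(G(b),i_{-})$ directly, rather than to try to construct $G(b)$ as a coequalizer.

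First I would check that $i_{-}$ is indeed dinatural. Writing $H(d_1,d_2):=G(d_1)\oti_{\Bbbk} \Hom_{\mathcal D}(d_2,b)$ (covariant in $d_1$, contravariant in $d_2$), a morphism $f\colon u\,{\to}\,u'$ in $\mathcal D$ gives rise, via covariance of $G$ and contravariance of $\Hom(-,b)$, to the two composites $H(u,u')\to H(u,u)\to G(b)$ and $H(u,u')\to H(u',u')\to G(b)$ whose equality on an element $w\oti g$ (with $w\,{\in}\,G(u)$, $g\,{\in}\,\Hom_{\mathcal D}(u',b)$) reduces to the obvious identity $G(g\,{\circ}\, f).w \,{=}\, G(g).G(f).w$ coming from functoriality of $G$.

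Next I would establish the universal property. Given any dinatural transformation $\phi$ from $H(-,-)$ to some vector space $V$, the natural candidate for the factorizing map $\bar\phi\colon G(b)\,{\to}\, V$ is
$$
\bar\phi(w) \,:=\, \phi_b(w\oti \id_b)\,, \qquad w\,{\in}\, G(b)\,.
$$
To check $\bar\phi\circ i_u \,{=}\, \phi_u$, the key step is to apply the dinaturality square of $\phi$ to the morphism $f\colon u\,{\to}\, b$ itself, evaluated at the element $w\oti \id_b\,{\in}\, H(u,b)$. The contravariant leg produces $w\oti(\id_b\,{\circ}\, f)\,{=}\, w\oti f$ in $H(u,u)$, while the covariant leg produces $G(f).w\oti \id_b$ in $H(b,b)$; equating the two outputs gives exactly $\phi_u(w\oti f)\,{=}\,\phi_b(G(f).w\oti\id_b)\,{=}\,\bar\phi(i_u(w\oti f))$.

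For uniqueness, I would use that any linear map $\psi\colon G(b)\,{\to}\, V$ factoring $\phi$ through $i_{-}$ must in particular satisfy $\psi(w)\,{=}\,\psi(i_b(w\oti\id_b))\,{=}\,\phi_b(w\oti\id_b)\,{=}\,\bar\phi(w)$, exploiting the identity $i_b(w\oti\id_b)\,{=}\, G(\id_b).w\,{=}\, w$. I do not anticipate a real obstacle: the whole argument rests on testing dinaturality at the single morphism $f\colon u\,{\to}\, b$, and the only care needed is keeping the variance of the two arguments of $H$ straight.
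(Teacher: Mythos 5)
Your proposal is correct and follows essentially the same route as the paper's proof: the same dinaturality check via functoriality of $G$, the same factorizing map $w\mapsto\phi_b(w\oti\id_b)$, the same key step of evaluating the dinaturality square at $f\colon u\to b$ on the element $w\oti\id_b$, and the same uniqueness argument from $i_b(w\oti\id_b)=w$.
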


\begin{proof}
For any morphism $g \,{\in}\, \Hom_{\mathcal D}(u,v)$ the linear maps
  $$  
  \begin{array}{ll}
  i_u \circ (\id_{G(u)} \oti g^*): & w \oti f \longmapsto \big(G(f \,{\circ}\, g)\big)\,.w
  \qquad{\rm and}
  \\[-1.3em]\\[4mm]
  i_v \circ (G(g) \oti \id^*): & w \oti f \longmapsto G(f).\big(G(g).w \big)
  \end{array}
  $$  
are equal, hence the family $(i_u)$ is indeed dinatural.
To show the universal property of the coend, let $j_u \colon G(u) \,{\otimes_{\Bbbk}}\,
\Hom_{\mathcal D}(u,b) \,{\to}\, W$ be any dinatural transformation to a
vector space $W$. Then for any $g \,{\in}\, \mathcal D$ consider the diagram
  \begin{equation}
  \label{eq:xyp1X}
  \begin{tikzcd}
  ~ & G(u) \,{\otimes_{\Bbbk}}\, \End_{\mathcal D}(b) \ar{dl}[left,yshift=3pt]
  {\idsm_{G(u)}\otimes g^*} \ar{dr}[xshift=-3pt]{G(g)\oti\idsm^*} & ~
  \\
  G(u) \,{\otimes_{\Bbbk}}\, \Hom_{\mathcal D}(u,b) \ar{r}{i_u} \ar{dr}[left,yshift=-7pt,xshift=3pt]{j_u^{}}
  & G(b) \ar{d}{\kappa} 
  & G(b) \,{\otimes_{\Bbbk}}\, \End_{\mathcal D}(b) \ar{dl}[yshift=2pt]{j_b} \ar{l}[left,yshift=6pt]{i_b}
  \\
  ~& W & ~
  \end{tikzcd}
  \end{equation}
  where the linear map $\kappa\colon G(b) \,{\to}\, W$ is defined as
$\kappa(w) \,{:=}\, j_b(w \oti \id_b)$.
The outer square of the diagram \eqref{eq:xyp1X} commutes by dinaturalness of the 
family $(j_u)$, and the upper square commutes by dinaturalness of $(i_u)$. Further we have
  \begin{equation}
  \label{eq:kappa====}
  \kappa \,{\circ}\, i_u(w{\otimes}f) \equiv \kappa(G(f).w) = j_b(G(f).w \oti \id_B)
  = j_u(w \oti f^*_{}(\id_b)) = j_u(w{\otimes}f) 
  \end{equation}  
for all $w\oti f \,{\in}\, G(u) \,{\otimes_{\Bbbk}}\, \Hom_{\mathcal D}(u,b)$.
Hence $\kappa \,{\circ}\, i_u \,{=}\, j_u$, i.e.\ the left hand triangle in \eqref{eq:xyp1X} 
commutes. (The right triangle, which is just a specialization
of the left one, obtained by setting $u$ to $b$, then commutes as well.)
Thus the map $\kappa$ satisfies the equalities needed for the vector space $G(b)$ 
to have the universal property of the coend.
Uniqueness of $\kappa$ follows immediately by reading the sequence \eqref{eq:kappa====}
of equalities from right to left and noticing that the elements $i_b(w{\otimes} f)$ span 
the vector space $G(b)$.
\end{proof}

We refer to the property of the Hom functor
asserted by Proposition \ref{prop:deltaX} as the \emph{delta function property}.
In the special case that $G$ is a Hom functor as well, we have

\begin{coro}\label{coro:deltaprop}
Let $\mathcal D$ be a $\Bbbk$-linear category. For any pair of objects 
$u,v \,{\in}\, \mathcal D$ the coend of the func\-tor $\Hom_{\mathcal D}(u,-) 
\,{\otimes_{\Bbbk}}\, \Hom_{\mathcal D}(-,v)$ exists.  As an object it is
  \begin{equation}
  \label{eq:deltaprop}
  \int^{x\in\mathcal D}\! \Hom_{\mathcal D}(u,x) \,{\otimes_{\Bbbk}}\, \Hom_{\mathcal D}(x,v)
  = \Hom_{\mathcal D}(u,v)
  \end{equation}
and the dinatural transformation is given by composition,
  $$  
  i_x(f,g) = g \circ f 
  $$  
for $f \,{\in}\, \Hom_{\mathcal D}(u,x)$ and $g \,{\in}\, \Hom_{\mathcal D}(x,v)$.
\end{coro}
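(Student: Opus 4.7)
The plan is to deduce the corollary as a direct specialization of Proposition \ref{prop:deltaX}. Since $\mathcal{D}$ is $\Bbbk$-linear, for any fixed $u\,{\in}\,\mathcal{D}$ the representable functor $G := \Hom_{\mathcal{D}}(u,-)\colon \mathcal{D}\,{\to}\,\Vect$ is $\Bbbk$-linear, so Proposition \ref{prop:deltaX} applies to $G$ with the choice $b \,{=}\, v$. Existence of the coend in \eqref{eq:deltaprop} is then immediate, and the proposition identifies its underlying vector space as $G(v) \,{=}\, \Hom_{\mathcal{D}}(u,v)$.

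It remains only to unwind what the proposition's dinatural transformation becomes in this situation. According to Proposition \ref{prop:deltaX}, the dinatural map at $x\,{\in}\,\mathcal{D}$ is
\[
  i_x: \quad G(x) \otimes_{\Bbbk} \Hom_{\mathcal{D}}(x,v) \,\ni\, f\oti g \,\longmapsto\, G(g).f \,\in\, G(v).
\]
For the representable $G \,{=}\, \Hom_{\mathcal{D}}(u,-)$ the action of $G$ on a morphism $g\colon x\,{\to}\,v$ is post-composition, i.e.\ $G(g).f \,{=}\, g\,{\circ}\,f$ for all $f\,{\in}\,\Hom_{\mathcal{D}}(u,x)$. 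Thus $i_x(f\oti g) \,{=}\, g\,{\circ}\,f$, which is precisely the formula asserted in the statement.

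There is essentially no obstacle: once one recognizes that representables on a $\Bbbk$-linear category are $\Bbbk$-linear $\Vect$-valued functors, the corollary is only a matter of substituting $G \,{=}\, \Hom_{\mathcal{D}}(u,-)$ into Proposition \ref{prop:deltaX} and reading off both the object and the dinatural family. The only mild subtlety worth flagging is the bookkeeping of the two variances: the first tensor factor $\Hom_{\mathcal{D}}(u,-)$ is covariant and the second $\Hom_{\mathcal{D}}(-,v)$ is contravariant, so the combined functor really lives on $\mathcal{D}\,{\Times}\,\mathcal{D}^{\mathrm{op}}$ as required by the proposition's framework.
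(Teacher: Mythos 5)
Your proposal is correct and is exactly how the paper obtains this result: the Corollary is stated there as the special case of Proposition \ref{prop:deltaX} in which $G$ is the representable functor $\Hom_{\mathcal D}(u,-)$ with $b=v$, so that $G(g).f = g\circ f$ gives the composition formula. No further argument is needed.
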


The delta function property can be used to perform a non-trivial check of
the idea that coends implement sewing of conformal blocks. We postulate that
for a surface of genus zero, conformal blocks are expressible in terms
of the tensor product and the Hom functor. Concretely, if a surface $\varSigma$ 
of genus 0 has $p$ incoming boundary circles with objects $u_1,...\,, u_p$
and $q$ outgoing boundary circles with objects  $\tilde u_1,...\,,\tilde u_q$,
the conformal blocks should be 
  $$
  \mathrm{Bl}(\varSigma)(u_1,...\,, u_p; \tilde u_1,...\,, \tilde u_q)
  = \Hom_{\mathcal C}(u_1 \,{\otimes}\cdots {\otimes}\, u_p,
  \tilde u_1 \,{\otimes}\cdots {\otimes}\, \tilde u_q) \,.
  $$
Suppose now that we sew two distinct genus-0 surfaces $\varSigma_1,\varSigma_2$ to 
a connected surface of genus zero. Sewing amounts to identify 
an outgoing boundary component of $\varSigma_1$ with an incoming one of $\varSigma_2$
(or vice versa); these boundary circles have to carry identical field insertions $x$,
and over these a coend is to be taken. Abbreviating the corresponding block spaces by
$\mathrm{Bl}(\varSigma_1)(u;\tilde u, x)$ and by $\mathrm{Bl}(\varSigma_2)(x, v;\tilde v)$,
respectively, we thus obtain
  $$
  \begin{array}{ll}
  \displaystyle
  \int^{x\in\mathcal C}\!\! \mathrm{Bl}(\varSigma_1)(u;\tilde u, x) \,{\otimes_{\Bbbk}}\,
  \mathrm{Bl}(\varSigma_2)(x \oti v;\tilde v) 
  \!\! & \displaystyle
  \cong \int^{x\in\mathcal C}\!\! \Hom_{\mathcal C} (u,\tilde u \oti x)
  \,{\otimes_{\Bbbk}}\, \Hom_{\mathcal C} (v \oti x;\tilde v)
  \\{}\\[-9pt] & \displaystyle
  \cong\, \Hom_{\mathcal C} ( u \oti v , \tilde u \oti \tilde v)
  \,\cong\, \mathrm{Bl}(\varSigma) (u,v ; \tilde u,\tilde v) \,.
  \end{array}
  $$
Hence the result of the sewing is indeed compatible with our postulate for the
conformal blocks of genus-$0$ surfaces.


\section{Coends in functor categories}

As already pointed out, conformal blocks are functors. Moreover, conformal blocks for
general surfaces should be obtained from morphism spaces by summing over intermediate 
states, and thus by taking coends. For constructing conformal blocks we therfore need 
to consider coends with values in functor categories. A central fact in this respect is
the so-called parameter theorem for coends \cite[Sect.\,IX.7]{MAcl}.

\subsection{Parameter theorem for coends}

We start with a functor $G \,{=}\, G(?;-,-) \colon \mathcal D\Times 
\mathcal C^{\mathrm{op}}\Times\mathcal C \,{\to}\, \mathcal E$. On the one hand, 
we may invoke adjunction in the bicategory of categories and reinterpret $G$ as a functor
  $$ 
  \widetilde G(-,-) := G(?;-,-): \quad
  \mathcal C^{\mathrm{op}}\Times\mathcal C \xrightarrow~ \mathcal F\hspace*{-2pt}un(\mathcal D , \mathcal E) \,.
  $$ 
If the coend of $\tilde G$ exists, it is an object in the functor category
$\mathcal F\hspace*{-2pt}un(\mathcal D , \mathcal E)$; we denote this functor by
  $$ 
  \big( \int^{c\in\mathcal C}\! \widetilde G(c,c)\, \big)\, (?): \quad \mathcal D \to 
  \mathcal E \,.
  $$
On the other hand  we may perform the following construction: For a fixed object
$d \,{\in}\, \mathcal D$, called a \emph{parameter} in this context, we obtain a
functor $G_d \,{:=}\, G(d;-,-) \colon \mathcal C^{\mathrm{op}}\Times\mathcal C \,{\to}\, \mathcal E$. Suppose its coend exists; it
is an object
  $$
  e_{d} := \int^{c\in\mathcal C}\! G_{d}(c,c) \,\in \mathcal E \,.
  $$
The fact that $G$ is functorial in $d$ as well implies that the assignment
  $$
  d \,\longmapsto\, e_d
  $$
defines a functor 
  $$
  \int^{c\in\mathcal C}\! G(?;c,c):\quad \mathcal D \,{\to}\, \mathcal E \,.
  $$
The parameter theorem for coends now states: 

\begin{thm}
Let $\mathcal D\Times \mathcal C^{\mathrm{op}}\Times\mathcal C \,{\to}\, \mathcal E$ be a 
functor. Then the functor 
  $$
  \int^{c\in\mathcal C}\! G(?;c,c):\quad \mathcal D \,{\to}\, \mathcal E
  $$
has a natural structure of a coend for the functor
  $$
  \tilde G:\quad \mathcal C^{\mathrm{op}}\Times\mathcal C \xrightarrow~ 
  \mathcal F\hspace*{-2pt}un(\mathcal D , \mathcal E) \,,
  $$
provided that all coends $\int^{c\in\mathcal C}\! G(d;c,c)$ exist. We write
  \begin{equation}
  \label{eq:G??xx}
  \int^{c\in\mathcal C}\! G(?;c,c) = \big( \int^{c\in\mathcal C}\! \widetilde G(c,c)\, \big)\, (?) \,.
  \end{equation}  
\end{thm}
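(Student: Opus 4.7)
My plan is to construct the claimed coend structure in three stages: first produce the functor, then the dinatural transformation into it, then verify the universal property; each step reduces to an application of the universal property of the pointwise coends $e_d$.

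First I would promote the assignment $d \mapsto e_d = \int^{c\in\mathcal C} G(d;c,c)$ to a functor $\mathcal D \to \mathcal E$. Let $\iota^d_c\colon G(d;c,c) \to e_d$ denote the dinatural transformation witnessing each pointwise coend. Given $\phi\colon d \to d'$ in $\mathcal D$, the family $\iota^{d'}_c \circ G(\phi;\id_c,\id_c)\colon G(d;c,c) \to e_{d'}$ is dinatural in $c$ because $G(\phi;-,-)$ is natural and $\iota^{d'}$ is dinatural. The universal property of $(e_d,\iota^d)$ therefore yields a unique morphism $e_\phi \in \Hom_{\mathcal E}(e_d, e_{d'})$ with $e_\phi \circ \iota^d_c = \iota^{d'}_c \circ G(\phi;\id,\id)$ for every $c$. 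Functoriality $e_{\id_d} = \id_{e_d}$ and $e_{\phi'\circ\phi} = e_{\phi'}\circ e_\phi$ follows from the uniqueness clause applied to the two evidently commuting defining relations.

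Next, I would interpret the same relation $e_\phi \circ \iota^d_c = \iota^{d'}_c \circ G(\phi;\id,\id)$ as asserting that, for each fixed $c$, the components $\{\iota^d_c\}_{d\in\mathcal D}$ constitute a natural transformation $\iota_c\colon \widetilde G(c,c) \Rightarrow \int^{c}\! G(?;c,c)$ in $\mathcal F\hspace*{-2pt}un(\mathcal D,\mathcal E)$. Dinaturality of the family $\{\iota_c\}_{c\in\mathcal C}$ with respect to morphisms $f\colon c\to c'$ in $\mathcal C$ is a statement about natural transformations in $\mathcal F\hspace*{-2pt}un(\mathcal D,\mathcal E)$, and because limits and colimits in functor categories (as well as equality of natural transformations) are tested objectwise, it amounts to the dinaturality of $\{\iota^d_c\}_c$ in $c$ for each fixed $d$, which holds by definition of the pointwise coend.

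Finally, I would verify the universal property. Let $H\colon \mathcal D\to\mathcal E$ be any functor and $\{\varphi_c\colon \widetilde G(c,c)\Rightarrow H\}_{c\in\mathcal C}$ a dinatural transformation in $\mathcal F\hspace*{-2pt}un(\mathcal D,\mathcal E)$. For each $d\in\mathcal D$ the components $\varphi_c(d)\colon G(d;c,c)\to H(d)$ form a dinatural family in $c$, so the universal property of $(e_d,\iota^d)$ produces a unique $\kappa(d)\colon e_d \to H(d)$ with $\kappa(d)\circ \iota^d_c = \varphi_c(d)$ for all $c$. To see that $\{\kappa(d)\}_d$ is natural in $d$, apply $H(\phi)$ on the left and $e_\phi$ on the right in the defining equation and use naturality of $\varphi_c$ in $d$ together with the defining relation of $e_\phi$; this shows both $H(\phi)\circ\kappa(d)$ and $\kappa(d')\circ e_\phi$ solve the same universal problem for $\varphi_c(d')\circ G(\phi;\id,\id)$ cofactored through $\iota^d_c$, and uniqueness forces them to agree. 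Uniqueness of $\kappa$ as a natural transformation is again objectwise and inherited from uniqueness of each $\kappa(d)$. Formula \eqref{eq:G??xx} then merely records the identification.

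The main obstacle is the naturality of $\kappa$ in the parameter $d$: it is tempting to believe colimits in $\mathcal F\hspace*{-2pt}un(\mathcal D,\mathcal E)$ are automatic objectwise, but the coend in the functor category is not assumed to exist a priori, so the argument has to be run entirely through the universal properties of the pointwise coends and the uniqueness-of-factoring principle used twice, first to build $e_\phi$ and then to compare the two candidates for $H(\phi)\circ\kappa(d) = \kappa(d')\circ e_\phi$.
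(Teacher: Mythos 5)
Your proposal is correct and follows the standard pointwise argument: build $e_\phi$ from the universal property of each $e_d$, read the defining relation as naturality of the structure maps, and verify dinaturality and the universal property objectwise, using uniqueness of factorizations to get naturality of $\kappa$. The paper itself gives no proof of this theorem (it quotes it from Mac Lane, Ch.\ IX.7) beyond sketching the construction of the functor $d\mapsto e_d$ in the surrounding text, and your argument is precisely the one that fills in that reference, so there is nothing to flag.
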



\subsection{Left exact coends and representability}

These rather general statements are, however, not directly suited for the application 
to conformal blocks we have in mind.
Indeed, in the construction of conformal blocks one encounters the situation that
a connected world sheet of genus $g$ is sewn to a world sheet of genus $g{+}1$.
This naturally leads to a functor which is a coend of the form $\int^{e\in\mathcal C}\!
\Hom_{\mathcal C}(-,- \oti S(e,e))$ with $S$ the inner End functor.
For applications to quantum field theory, it is desirable to express such a coend in the form
  $$
  \Hom(-,-\otimes L)
  $$
for some object $L$ that has a direct interpretation in a representation category $\mathcal C$,
so that the representation category organizes not only the incoming and outgoing states, 
but the intermediate states as well. One might call this requirement 
the existence of an operator calculus in terms of the underlying representation category.

Thus, roughly speaking, the idea is to ``pull the coend into the Hom functor.'' 
However, the Hom functor is continuous, and thus compatible with the end, which is a limit:
in the case of an end, we have for any functor $G\colon \CopC \to \mathcal D$ the equality
  \begin{equation}
  \label{eq:MAclIX5}
  \Hom_{\mathcal D}\big(d,\int_{c\in\mathcal C} G(c,c) \big)
  = \displaystyle \int_{c\in\mathcal C} \Hom_{\mathcal D}(d,G(c,c)) \,,
  \end{equation}  
and similarly, when $G$ appears in the first argument,
  \begin{equation}
  \Hom_{\mathcal D}\big(\int^{c\in\mathcal C}\! G(c,c),d \big)
  = \displaystyle \int_{c\in\mathcal C} \Hom_{\mathcal D}(G(c,c),d) \,,
  \end{equation}  
provided that the (co)end on the left hand side exists \cite[Sect.\,IX.5]{MAcl}.
In contrast, there are no similar equalities for the coends
$\int^{c} \Hom_{\mathcal D}(d,G(c,c))$ and $\int^{c} \Hom_{\mathcal D}(G(c,c),d)$.

As already explained, we want to have some representability of the block
functors within the representation category. For this reason,
the following result is important, as it singles out left exact functors:

\begin{prop}\label{prop:doSs1.10} {\rm \cite[Cor.\,1.10]{doSs}}
For $\mathcal C$ a finite $\Bbbk$-linear category, a functor $G\colon \mathcal C^{\mathrm{op}}
\,{\to}\, \Vect$ is representable, i.e.\ $ G(-) \,{\cong}\, \HomC(-,y) $ 
for some object $y$ of \,$\mathcal C$, iff the functor $G$ is left exact.
\end{prop}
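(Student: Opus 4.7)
For the easy direction I would just observe that $\Hom_{\mathcal C}(-,y)\colon \mathcal C^{\mathrm{op}}\to\Vect$ sends colimits of $\mathcal C$ to limits of $\Vect$ and is therefore automatically left exact; no finiteness of $\mathcal C$ is used here.

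For the substantive converse the plan is to exploit the structural description of a finite $\Bbbk$-linear category. Such a $\mathcal C$ admits a projective generator $P$, and with $A:=\End_{\mathcal C}(P)$ the functor $\Hom_{\mathcal C}(P,-)$ implements an equivalence $\mathcal C\simeq A\text{-mod}_{\mathrm{fd}}$ under which $P$ corresponds to $A$ viewed as a module over itself. This reduces the representability question to the case $\mathcal C=A\text{-mod}_{\mathrm{fd}}$. Given a left exact $G$ there, I would set $M:=G(A)$; finite-dimensionality of $M$ follows from $G$ taking values in $\Vect$, and the action of $A$ on itself by module endomorphisms promotes $M$, via functoriality of $G$, to an object of $A\text{-mod}_{\mathrm{fd}}$, which I take as the candidate representing object $y$.

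To build the comparison natural transformation $\eta_X\colon \Hom_A(X,y)\to G(X)$ I would first take $\eta_A$ to be the canonical identification $\Hom_A(A,y)\cong y=M=G(A)$, extend additively to finite direct sums $A^{\oplus n}$ (both functors convert finite coproducts into finite products), and then for arbitrary $X$ pick a free presentation $A^{\oplus m}\to A^{\oplus n}\to X\to 0$ and apply the five-lemma: both rows of the resulting comparison diagram are left exact --- for $\Hom_A(-,y)$ automatically, for $G$ by hypothesis --- and the isomorphisms $\eta_{A^{\oplus k}}$ on the two rightmost columns force $\eta_X$ to be an isomorphism. Naturality of $\eta_X$ in $X$ is inherited from naturality on free modules together with the universal property of kernels.

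The hard part will be the careful bookkeeping of module-theoretic conventions --- in particular verifying that the $A$-module structure on $G(A)$ produced by functoriality of $G$ is precisely the one that makes $\eta$ a module homomorphism --- and the use of the finiteness of $\mathcal C$ in two essential places: to obtain a projective generator $P$ (and with it the algebra $A$ and the Morita-type equivalence $\mathcal C\simeq A\text{-mod}_{\mathrm{fd}}$), and to ensure that $G(A)$ is finite-dimensional so that the representing object $y$ genuinely lies in $\mathcal C$ rather than only in its pro-completion. Both ingredients fail without the finiteness hypothesis, which is why in that generality representability typically has to be deduced from the special adjoint functor theorem instead.
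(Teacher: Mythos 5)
Your argument is essentially correct, but note that the paper does not prove this statement at all: it is quoted verbatim from \cite[Cor.\,1.10]{doSs}, so there is no in-paper proof to compare against. Your route --- choose a projective generator $P$, pass to $A\mbox{-mod}$ with $A=\End_{\mathcal C}(P)$, take $y:=G(A)$ with the $A$-action obtained by applying $G$ to the right multiplications $r_a\in\End_A(A)$, and extend the canonical isomorphism $\Hom_A(A,y)\cong G(A)$ from free modules to all of $\mathcal C$ via a presentation $A^{\oplus m}\to A^{\oplus n}\to X\to 0$ and left exactness of both sides --- is the standard Eilenberg--Watts-type proof and goes through: every object of $A\mbox{-mod}_{\rm fd}$ is finitely presented, $G$ is additive because it preserves finite limits, and the module structure on $G(A)$ does match the one needed for $\eta_A$ to be natural under the maps $r_a$. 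Two small corrections to your commentary. First, your claim that the easy direction uses no finiteness is not quite right in the conventions of this paper: here $\Vect$ denotes \emph{finite-dimensional} vector spaces, so one needs $\HomC(x,y)$ to be finite dimensional, i.e.\ Hom-finiteness of $\mathcal C$, already for $\HomC(-,y)$ to land in $\Vect$ (left exactness itself is of course automatic). Second, and for the same reason, the finite-dimensionality of $G(A)$ is not an extra issue to worry about --- it is built into the target category --- whereas the genuinely essential uses of finiteness are the existence of the projective generator and the finite presentability of all objects, exactly as you identify.
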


Indeed, our construction starts with Hom functors, which are
left exact. We thus need a modification of the coend construction
for left exact functors. The following construction is due to \cite{lyub11}.

We are given a left exact $\Bbbk$-linear functor 
$G \,{=}\, G(?;-,-) \colon \mathcal D\Times \mathcal C^{\mathrm{op}}\Times\mathcal C \,{\to}\, \mathcal E$,
which we again reinterpret as a functor
  $$ 
  \widetilde G(-,-) := G(?;-,-): \quad
  \mathcal C^{\mathrm{op}}\Times\mathcal C \xrightarrow~ 
  \mathcal F\hspace*{-2pt}un(\mathcal D , \mathcal E) \,.
  $$ 
The ordinary coend $H \,{:=}\, \int^{c\in\mathcal C}\! \widetilde G(c,c)$ of this functor 
is a functor $H\colon \mathcal D \,{\to}\, \mathcal E$ with a dinatural family that is universal 
among all dinatural transformations to all functors. We now adapt our universality requirement:
the \emph{left exact coend} is a left exact functor
$H_{\rm l.ex.} \,{:=}\, \oint^{c\in\mathcal C}\! \widetilde G(c,c)\colon \mathcal D\to\mathcal E$
with a dinatural family of natural transformations with components
  $$
  \nu_{c;d}^{}:\quad G(d,c,c)\to H_{\rm l.ex.}(d)
  $$
that is universal among all dinatural transformation to all \emph{left exact} 
functors $\mathcal D \,{\to}\,\mathcal E$, rather than among
all dinatural transformation to all functors $\mathcal D \,{\to}\, \mathcal E$.
  
We summarize this in the following definition \cite{lyub11}:

\begin{defi} \label{def:oint}
For a left exact $\Bbbk$-linear functor $G\colon \mathcal D\Times\mathcal C\Times
\mathcal C^{\mathrm{op}}
\,{\to}\, \mathcal E$, the coend $\oint^{x}\! G(?;x,x)$ with values in the functor category
$\mathcal Lex(\mathcal D,\mathcal E)$ of left exact functors which is characterized by universality
among dinatural transformations to left exact functors is called the \emph{left exact coend}.
\end{defi}

We now assume that the category $\mathcal C$ is a finite rigid tensor category.
Thus in particular the coend
$ L \,{:=}\, \int^{c\in\mathcal C}\! c \oti c^\vee$ exists as an object of $\mathcal C$.
Working with left exact coends supplies a substitute for the identities \eqref{eq:MAclIX5},
namely the following statement, which is contained in the 
constructions given in Section 8.2 of \cite{lyub11}:

\begin{prop}\label{prop:intHom-vs-L}
Consider the $\Bbbk$-linear functor 
  $$
  \HomC(u,v\,{\otimes}-{\otimes}\,-^\vee) \colon\quad \mathcal C^{\mathrm{op}}\Times\mathcal C \to \Vect \,.
  $$ 
The family  
  $$
  (\id_v \oti \imath^L_c)_*^{}:\quad \HomC(u,v\,{\otimes}\, c \,{\otimes}\, c^\vee)\to \Hom(u,v\otimes L) 
  $$
of morphisms, with $(L;\imath^L)$ the coend $\int^{x\in\mathcal C}\!x \oti x^\vee$, 
is dinatural and endows the left exact functor 
  $$
  \HomC(-,-\oti L)\in \mathcal Lex(\mathcal C^{\mathrm{op}}\Times\mathcal C,\Vect)
  $$
with the structure of a left exact coend in the sense of Definition {\rm \ref{def:oint}}.
\\
In short,
  \begin{equation}
  \label{eq:oint=HomL}
  \oint^{x\in\mathcal C}\!\! \HomC(u,v\oti x \oti x^\vee)
  = \big(\, \HomC(u,v\oti L) \,; (\id_v \oti \imath^L)_*^{} \,\big) \,.
  \end{equation}  
\end{prop}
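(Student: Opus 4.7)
The plan is to reduce the universal property of the left exact coend in $\mathcal Lex(\mathcal C^{\mathrm{op}}\Times\mathcal C,\Vect)$ to the ordinary universal property of $L$ in $\mathcal C$, by using Proposition \ref{prop:doSs1.10} to translate dinatural transformations between $\Vect$-valued functors into dinatural families of morphisms inside $\mathcal C$ via Yoneda.

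First I would dispose of the easy parts. Dinaturality of the family $(\id_v \oti \imath^L_c)_*$ is inherited directly from the dinaturality of $\imath^L$, by tensoring on the left with $\id_v$ and post-composing with elements of $\HomC(u,-)$. Left exactness of $\HomC(-, - \oti L)$ in each variable is straightforward: $\HomC(-, z)$ sends colimits to limits for any $z$, and tensoring with any fixed object in the rigid category $\mathcal C$ is exact since it has both a left and a right adjoint through duality.

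For the universal property, let $F\colon \mathcal C^{\mathrm{op}}\Times\mathcal C \to \Vect$ be left exact, equipped with a dinatural family $\nu_c\colon \HomC(u, v \oti c \oti c^\vee) \to F(u,v)$ natural in $u$ and $v$. For each fixed $v$ the functor $F(-, v)\colon \mathcal C^{\mathrm{op}} \to \Vect$ is left exact and $\Bbbk$-linear, so Proposition \ref{prop:doSs1.10} supplies a representing object $R(v) \in \mathcal C$ with $F(-, v) \cong \HomC(-, R(v))$; by Yoneda, $R$ is functorial in $v$, and the family $\nu_c$ corresponds to morphisms $\tilde\nu_{c,v} := \nu_c(\id) \in \HomC(v \oti c \oti c^\vee, R(v))$ that are dinatural in $c$ and natural in $v$, with $\nu_c = (\tilde\nu_{c,v})_*$. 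Since $v \oti -$ preserves colimits (it has a right adjoint by rigidity), one has $v \oti L \cong \int^{c\in\mathcal C} v \oti c \oti c^\vee$ with dinatural family $\id_v \oti \imath^L_c$. The universal property of this coend inside $\mathcal C$ yields a unique $\tilde\kappa_v\colon v \oti L \to R(v)$ satisfying $\tilde\kappa_v \circ (\id_v \oti \imath^L_c) = \tilde\nu_{c,v}$; setting $\kappa_{u,v} := (\tilde\kappa_v)_*$ and identifying $\HomC(u, R(v))$ with $F(u,v)$ gives the desired natural transformation, and post-composing yields $\kappa_{u,v} \circ (\id_v \oti \imath^L_c)_* = \nu_c$.

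The main obstacle I expect is verifying $v$-naturality of $\kappa$ ($u$-naturality is automatic for a post-composition map). Given $g\colon v \to v'$, the two morphisms $R(g) \circ \tilde\kappa_v$ and $\tilde\kappa_{v'} \circ (g \oti \id_L)$ in $\HomC(v \oti L, R(v'))$ both factor the dinatural family $R(g) \circ \tilde\nu_{c,v} = \tilde\nu_{c,v'} \circ (g \oti \id_{c\oti c^\vee})$ (using $v$-naturality of $\tilde\nu$) through $v \oti L$, so the uniqueness clause in the universal property of that coend forces them to agree. Uniqueness of $\kappa$ itself is then immediate: any alternative $\kappa'$ with the required factorization property translates via Yoneda into an alternative factorization of the $\tilde\nu_{c,v}$ through $v \oti L$, which must coincide with $\tilde\kappa_v$ by the same uniqueness clause, whence $\kappa' = \kappa$.
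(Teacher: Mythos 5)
Your proof is correct and follows essentially the same route as the paper's: fix $v$, represent the target left exact functor via Proposition \ref{prop:doSs1.10}, transport the dinatural family into $\mathcal C$ by the Yoneda lemma, and invoke the universal property of the coend $v \otimes L$ (using that $v \otimes -$ preserves colimits), with uniqueness again read off through Yoneda. The only substantive difference is that you spell out the naturality in $v$ of the induced morphism, a point the paper dispatches with ``one can check''.
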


\begin{proof}
First note that the family $(\id_v \oti \imath^L)_*^{}$ is natural in $u$
(because pre- and post-composition commute) as well as in $v$ (because
$\id_v$ commutes through post-composition). Further, the family
$(\id_v \oti \imath^L)_*^{}$ of linear maps furnishes a dinatural transformation: 
Dinaturalness means that
  $$ 
  (\id_v \oti \imath^L_{x'})_*^{} \circ (\id_x \oti \id_{x'} \oti f^\vee{)}_*^{} 
  = (\id_x \oti \imath^L_x)_*^{} \circ (\id_x \oti f \oti \id_{x^\vee_{}}{)}_*^{} 
  $$ 
holds for any morphism $f \,{\in}\, \HomC(x',x)$; this property
follows directly from the dinaturalness of the family $\imath^L$.
\\[2pt]
To see the universal property, let $G(u;v)\colon \mathcal C^{\mathrm{op}}\Times\mathcal C \,{\to}\,
\Vect$ be a functor that for any choice of parameters $u,v\,{\in}\,\mathcal C$ is left exact,
and let $\imath^G$, with
  $$ 
  \imath^G_W \equiv \imath^{G(u;v)}_W\colon \quad
  \HomC(u,v\oti x \oti x^\vee) \longrightarrow G(u;v) \,,
  $$ 
be a dinatural family in the category of left exact functors from $\mathcal C^{\mathrm{op}}\Times\mathcal C$ to \Vect. 
In the sequel we regard $v$ as kept fixed and $u$ as a variable, 
whereby we deal with a left exact functor $G$ from $\mathcal C^{\mathrm{op}}$ to \Vect. 
According to Proposition \ref{prop:doSs1.10}, such a functor is representable, i.e.\ 
we have isomorphisms $\gamma_u \colon G(u) \,{\xrightarrow{~\cong~}}\, \HomC(u,y_G)$ 
for some $y_G\,{\in}\,\mathcal C$.  Dinaturalness of the family $\imath^G$
thus means that for any $f \,{\in}\, \HomC(x',x)$ the diagram 
  \begin{equation}
  \label{eq:dinat-iG}
  \begin{tikzcd}
  \HomC(u,v\oti x'\oti x^\vee) \ar{rr}{(\idsm_v\otimes f\otimes \idsm_{x^\vee_{}})_*^{}}
  \ar{d}{{(\idsm_v\otimes \idsm_{x'}\otimes f^\vee)}_*^{}}
  &~& \HomC(u,v\oti x\oti x^\vee) \ar{d}{\gamma_u\,\circ\,\imath^G_x}
  \\
  \HomC(u,v\oti x'\oti x'{}^{\vee}_{})  \ar{rr}{~~~~~~\imath^G_{x'}}
  &~& \HomC(u,y_G)
  \end{tikzcd}
  \end{equation}  
commutes for all $u \,{\in}\, \mathcal C$. 
By the Yoneda lemma this implies the existence of a commutative diagram
  $$ 
  \begin{tikzcd}
  v\oti x'\oti x^\vee \ar{rr}{\idsm_v\otimes f\otimes \idsm_{x^\vee_{}}}
  \ar{d}{\idsm_v\otimes \idsm_{x'}\otimes f^\vee}
  &~& v\oti x\oti x^\vee \ar{d}{}
  \\
  v\oti x'\oti x'{}^{\vee}_{} \ar{rr}{} &~& y_G
  \end{tikzcd}
  $$ 
in $\mathcal C$, for any $f \,{\in}\, \HomC(x',x)$, i.e.\ there is a dinatural 
family $\jmath_x\colon v \oti x\oti x^\vee \,{\to}\, y_G$ in $\mathcal C$.
The universal property of the coend $\int^x\! v \oti x\oti x^\vee$ then provides
us with a morphism $\int^x\! v \oti x\oti x^\vee \,{\xrightarrow{~~}}\, y_G$
-- the unique morphism which when composed with the dinatural family for the
coend gives the dinatural family $\jmath$. 
Using that the tensor product functor preserves colimits in $\mathcal C$,
we then also have a morphism $v \oti L \,{=}\, v \,{\otimes} \int^x\! x\oti x^\vee
\,{\xrightarrow{~~}}\, y_G$ or, by applying the Hom functor, a linear map
$\kappa^L_{G(u),v} \colon\, \HomC(u,v \oti L) \,{\xrightarrow{~~}}\, \HomC(u,y_G) \,{=}\, G(u)$.
In summary, and restoring $v$ as a parameter, we have obtained morphisms
  $$ 
  \kappa^L_{G(u;v)} : \quad \HomC(u,v \oti L) \longrightarrow G(u;v)
  $$ 
functorial in $u$. One can check that they are functorial in $v$ as well.
Moreover, by construction, these morphisms satisfy
$ i^{G(u;v)}_x \,{=}\, \kappa^L_{G(u;v)} \,{\circ}\, \big(\id_v \oti \imath^L_x \big)_{\!*} $,
i.e., invoking the representability isomorphisms $G(-) \,{\cong}\, \HomC(-,y_G)$, the triangle
  \begin{equation}
  \label{eq:kappa.tria}
  \begin{tikzcd}
  \HomC(u,v \oti L) \ar{d}[left]{\kappa^L_{G(u;v)}}
  & ~ & \HomC(u,v\oti x'\oti x^\vee) \ar{dll}[yshift=3pt]{i^{G(u;v)}_x}
  \ar{ll}[left,yshift=8pt,xshift=27pt]{(\mbox{\footnotesize\sl id}_v \otimes \imath^L_x{)}_*}
  \\
  \HomC(u,y_G)
  \end{tikzcd}
  \end{equation}  
commutes. To see that they are in fact the unique morphisms with this property,
invoke the Yoneda lemma to obtain a commutative triangle
  $$ 
  \begin{tikzcd}
  v \oti L \ar{d}[left]{\tilde\kappa^L_{G(u;v)}}
  & ~ & v\oti x\oti x^\vee \ar{dll}[yshift=2pt]{j_x}
  \ar{ll}[left,yshift=8pt,xshift=22pt]{\mbox{\footnotesize\sl id}_v \otimes \imath^L_x}
  \\
  y_G
  \end{tikzcd}
  $$ 
Regarding $v\oti L$ as the coend $\int^{x\in\mathcal C}\! v\oti x\oti x^\vee$, with dinatural
family $\id_v \oti \imath^L$, the morphism $\tilde\kappa^L_{G(u;v)}$ in this diagram
is uniquely determined, and hence so is 
$\kappa^L_{G(u;v)} \,{=}\, \big(\tilde\kappa^L_{G(u;v)}{\big)}_{\!*}$ in \eqref{eq:kappa.tria}.
This establishes the universal property of the dinatural transformation 
$(\id_v \oti \imath^L{)}_*$ and thus finishes the proof.
\end{proof}

\begin{rem}
Assume that, for $\mathcal C$ and $\mathcal E$ finite tensor categories and a functor
$G\colon \mathcal C^{\mathrm{op}} \Times \mathcal C \,{\to}\, \mathcal E$, the left exact coend
$\oint^{c\in\mathcal C}\! \Hom_{\mathcal E}(-,- \oti G(c,c))$ exists.
Then the coend $\int^{c\in\mathcal C}\! G(c,c)$ exists in $\mathcal E$ and we have
  $$
  \oint^{c\in\mathcal C}\! \Hom_{\mathcal E}(-,- \oti G(c,c))
  \,\cong\,\, \Hom_{\mathcal E}(-,- \oti \mbox{$\large\int$}^{c\in\mathcal C} G(c,c)) \,,
  $$
with the structural morphisms of the coends on the left and right hand side related via
the Yoneda lemma. This can be shown similarly as Lemma 3.1 in \cite{shimi7}: First 
one uses the fact that, $\mathcal E$ being finite, the left exact functor 
$(u,v)\,{\mapsto}\,\oint^{c\in\mathcal C}\! \Hom_{\mathcal E}(u,v \oti G(c,c))$
from $\mathcal E^{\mathrm{op}} \Times \mathcal E$ to $\Vect$ is representable 
and thereby provides an object underlying the coend. Then one invokes the Yoneda
lemma in a similar way as in the proof of Proposition \ref{prop:intHom-vs-L}
to obtain the structure morphisms for the coend.
\end{rem}

\begin{rem}
For any finite tensor category $\mathcal C$ the category
$\widehat{\mathcal C} \,{:=}\, \mathcal Lex(\mathcal C^{\mathrm{op}},\Vect)$ of $\Bbbk$-linear 
left exact functors from $\mathcal C^{\mathrm{op}}$ to \Vect\ has a 
monoidal structure given by convolution. Moreover, the general form of such 
a convolution tensor product (which is e.g.\ discussed in \cite[Sect.\,2]{daSt5})
simplifies, as a consequence of Proposition \ref{prop:doSs1.10}, to
  $$ 
  \big( G_1 \oti G_2 \big) (-) := \!\int^{u\in\mathcal C}\!\! G_1(-{\otimes}u^\vee)
  \,{\otimes_{\Bbbk}}\, G_2(u) \,\cong\, \HomC(-,y_1 \oti y_2)
  $$ 
for all $G_1,G_2 \,{\in}\, \widehat{\mathcal C}$ such that $G_1(-) \,{\cong}\, \HomC(-,y_1)$ and
$G_2(-) \,{\cong}\, \HomC(-,y_2)$ with $y_1,y_2 \,{\in}\,\mathcal C$.
\end{rem}

\begin{rem}
For any small $\Bbbk$-linear abelian rigid monoidal category $\mathcal C$, the assignment
$\mathcal C \,{\ni}\, x \mapsto \HomC(-,x)$ provides a full embedding of $\mathcal C$ into 
the functor category $\widehat{\mathcal C}$.
This embedding is an exact monoidal functor.
The category $\widehat{\mathcal C}$ admits arbitrary limits and colimits, and thus
contains all pro-objects and ind-objects of $\mathcal C$ as objects; see e.g.\
\cite[Sect.\,3.4]{lyub3} and \cite[Thm.\,5.40]{BUde}.
Various results involving the coend $L$ are still valid as long as it exists as an object
of the category $\widehat{\mathcal C}$ rather than even of $\mathcal C$ \cite{lyub11}.
Specifically, Proposition \ref{prop:intHom-vs-L} still holds in this broader setting. (In
its proof the object $y_G\,{\in}\,\mathcal C$ then needs to be replaced by an object
in $\widehat{\mathcal C}$, and diagrams in $\mathcal C$ like \eqref{eq:dinat-iG} turn into
diagrams in $\widehat{\mathcal C}$.)
\end{rem}


\section{Fubini theorems}

A world sheet $\varSigma$ can have many different pair-of-pants decompositions. 
Conversely, many different sewings give rise to the same $\varSigma$ and, correspondingly,
the conformal blocks on $\varSigma$ can be described in many different ways as iterated
coends. On the other hand, sewing should be a local operation, and hence the order of
sewing should not matter. In order to obtain unique conformal blocks, such coends must 
therefore commute appropriately. This is indeed the case. We first quote
the standard Fubini theorem for coends:

\begin{prop}\label{prop:fubini} {\rm \cite[Ch.\,IX.7]{MAcl}} \\
Let $F\colon \mathcal C \Times \mathcal C^{\mathrm{op}} \Times \mathcal D \Times 
\mathcal D^{\mathrm{op}} \,{\to}\, \mathcal E$ be a functor
for which the coends $\int^{u\in\mathcal C}\! F(u,u,y,z)$ as well as the coends
$\int^{x\in\mathcal D}\! F(v,w,x,x)$ exist, for all $y,z\,{\in}\,\mathcal D$ and all 
$v,w\,{\in}\,\mathcal C$, respectively. Then there are unique isomorphisms
  $$ 
  \int^{u\in\mathcal C}\!\! \Big( \int^{x\in\mathcal D}\!\!\!\! F(u,u,x,x) \Big)
  \cong \!\int^{u\times x\,\in\,\mathcal C\times \mathcal D}\!\!\!\! F(u,u,x,x) 
  \cong \!\int^{x\in\mathcal D}\!\! \Big( \int^{u\in\mathcal C}\!\!\!\! F(u,u,x,x) \Big) \,.
  $$ 
In particular, each of these multiple coends exists.
\end{prop}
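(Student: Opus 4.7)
The plan is to show that all three objects in the statement represent the same universal dinatural cowedge under $F$, regarded as a functor on $(\mathcal{C}\Times\mathcal{D})^{\mathrm{op}}\Times(\mathcal{C}\Times\mathcal{D})$ with values in $\mathcal{E}$. By uniqueness of representing objects, the three will then be canonically isomorphic, and since the iterated coends exist by hypothesis, the joint coend will exist as well.

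The key preliminary step is a \emph{separation lemma} for dinatural transformations on product categories: a family of morphisms $\alpha_{u,x}\colon F(u,u,x,x)\,{\to}\, E$ indexed by $(u,x)\,{\in}\,\mathcal{C}\Times\mathcal{D}$ forms a dinatural transformation with respect to the product category if and only if it is dinatural in $u$ for each fixed $x$ and dinatural in $x$ for each fixed $u$. One direction is trivial (specialize to morphisms of the form $(f,\id)$ or $(\id,g)$). For the converse, any morphism in $\mathcal{C}\Times\mathcal{D}$ factors as $(f,g)\,{=}\,(f,\id)\,{\circ}\,(\id,g)\,{=}\,(\id,g)\,{\circ}\,(f,\id)$, and the dinaturality hexagon for $(f,g)$ is obtained by pasting the two separate dinaturality hexagons along a common edge; the functoriality of $F$ is what ensures the pasting is well-defined.

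Next I would combine this with the parameter theorem. For each fixed $u$, separate dinaturality in $x$ means that $x\,{\mapsto}\,\alpha_{u,x}$ factors uniquely through the inner coend to give $\bar\alpha_u\colon \int^{x\in\mathcal{D}}\! F(u,u,x,x)\,{\to}\, E$. As $u$ varies, the parameter theorem endows $u\,{\mapsto}\,\int^x\! F(u,u,x,x)$ with the structure of a functor $\mathcal{C}\Times\mathcal{C}^{\mathrm{op}}\,{\to}\,\mathcal{E}$, and separate dinaturality of $\alpha$ in $u$ translates (via uniqueness in the parameter construction) into dinaturality of the family $\bar\alpha$ in $u$. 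Hence the correspondence $\alpha\,{\leftrightarrow}\,\bar\alpha$ yields a natural bijection between dinatural families $F\,{\Rightarrow}\, E$ on the product and dinatural families $\int^x\! F(\cdot,\cdot,x,x)\,{\Rightarrow}\, E$ on $\mathcal{C}$. Thus a universal representing object of one side is a universal representing object of the other, proving
$\int^{u\times x}\! F(u,u,x,x)\,{\cong}\,\int^{u\in\mathcal{C}}\!\!\int^{x\in\mathcal{D}}\! F(u,u,x,x)$,
with the middle object also existing as a corollary. The symmetric argument (swap the roles of $\mathcal{C}$ and $\mathcal{D}$) gives the other isomorphism.

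The main obstacle is the separation lemma, specifically the claim that separate dinaturality implies joint dinaturality. The bookkeeping is routine in principle but requires care: one must write out the dinaturality hexagon for a general $(f,g)$, insert an intermediate vertex corresponding to the factorization, and check that the two halves are exactly the dinaturality hexagons for $(f,\id)$ at the object $x'$ (or $x$) and for $(\id,g)$ at the object $u'$ (or $u$). Everything else, including uniqueness of the mediating isomorphisms and the fact that coends, when they exist, are unique up to unique isomorphism, follows formally.
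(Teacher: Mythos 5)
Your argument is correct and is essentially the standard one from Mac\,Lane (Ch.~IX.8), which is exactly what the paper cites here -- the paper itself gives no proof of this proposition, so there is nothing to diverge from: the separation lemma (separate dinaturality of a cowedge into a fixed object implies joint dinaturality, via factoring $(f,g)$ through $(f,\id)$ and $(\id,g)$) plus the parameter theorem is the intended route, and your pasting argument for the hexagons goes through because the target $E$ is constant. One small imprecision: the stated hypotheses only guarantee the \emph{inner} coends with parameters, not the outer iterated coends, so the honest conclusion is that the three coends are canonically isomorphic with any one existing if another does (this looseness is already present in the proposition as quoted).
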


Since this formula resembles the commutativity of two integrations, 
it is referred to as the \emph{Fubini theorem} for iterated coends.
Invoking the Fubini theorem, the result \eqref{eq:oint=HomL} extends directly to multiple coends:

\begin{coro}
For any positive integer $g$ the functor 
$\HomC\big(u,v\oti (- {\otimes} -^{\!\vee})^{\otimes g} \big)\colon (\mathcal C{\times}
\mathcal C^{\mathrm{op}})^{\btimes g} \,{\to}\, \Vect$ has a $g$-fold left exact coend. It 
is given by
  $$ 
  \begin{array}{l} \displaystyle
  \oint^{(x_1 \btimes \,\cdots\, \btimes x_g)\,\in\,\mathcal C^{\btimes g}}\!\!
  \HomC(u,v\oti x_1^{} \oti x_1^\vee\oti \cdots \oti x_g^{} \oti x_g^\vee)
  \\[-1.3em]\\[1mm] \hspace{13.5em}
  = \big(\, \HomC(u,v\oti L^{\otimes g}) \,; (\id_v \oti (\imath^L)^{\otimes g}{)}_* \,\big)
  \,.  \end{array}
  $$ 
\end{coro}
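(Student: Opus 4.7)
The plan is to prove the corollary by induction on $g$, with the base case $g=1$ being precisely the content of Proposition \ref{prop:intHom-vs-L}.

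For the inductive step from $g-1$ to $g$, I would invoke the Fubini theorem (Proposition \ref{prop:fubini}) --- adapted to left exact coends --- to split off the outermost variable:
\[
\oint^{(x_1 \btimes \cdots \btimes x_g)}\!\! \HomC(u, v\oti x_1\oti x_1^\vee \oti \cdots \oti x_g\oti x_g^\vee)
\;\cong\; \oint^{x_1} \Bigl(\,\oint^{(x_2 \btimes \cdots \btimes x_g)}\!\! \HomC(u, v\oti x_1\oti x_1^\vee \oti \cdots \oti x_g\oti x_g^\vee)\,\Bigr).
\]
Treating $x_1$ as a parameter via the parameter theorem and setting $v'_{x_1} := v\oti x_1\oti x_1^\vee$, the inductive hypothesis identifies the inner $(g{-}1)$-fold coend with $\HomC(u, v'_{x_1} \oti L^{\otimes(g-1)}) = \HomC(u, v\oti x_1\oti x_1^\vee \oti L^{\otimes(g-1)})$ together with the dinatural family $(\id_{v\otimes x_1\otimes x_1^\vee}\oti (\imath^L)^{\otimes(g-1)})_*$.

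For the outer coend one then confronts the expression $\oint^{x_1}\! \HomC(u, v\oti x_1\oti x_1^\vee \oti L^{\otimes(g-1)})$, which is not literally of the form treated in Proposition \ref{prop:intHom-vs-L} because the coend pair $(x_1,x_1^\vee)$ is sandwiched between $v$ and $L^{\otimes(g-1)}$ rather than appearing at the rightmost position. To reduce to the situation of Proposition \ref{prop:intHom-vs-L}, I would apply the rigidity adjunction
\[
\HomC(u,\, A\oti L^{\otimes(g-1)}) \,\cong\, \HomC(u\oti (L^{\otimes(g-1)})^\vee,\, A),
\]
set $u' := u \oti (L^{\otimes(g-1)})^\vee$, invoke Proposition \ref{prop:intHom-vs-L} to obtain $\oint^{x_1}\! \HomC(u', v\oti x_1\oti x_1^\vee) = \HomC(u', v\oti L)$, and then dualise back to recover $\HomC(u, v\oti L \oti L^{\otimes(g-1)}) = \HomC(u, v\oti L^{\otimes g})$. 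Composing the two induction layers of structural morphisms yields the total dinatural family $(\id_v \oti (\imath^L)^{\otimes g})_*$.

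The main obstacle is delicate bookkeeping rather than any deep new idea: (i) one must verify that the Fubini statement of Proposition \ref{prop:fubini}, which is formulated for ordinary coends, remains valid when the coends in question are the left exact coends of Definition \ref{def:oint}. This is ensured because at each stage the inner coend is a \emph{representable} (hence left exact) functor of the remaining parameters --- so the universal property against left exact targets propagates through the iteration; and (ii) one must track that the rigidity adjunction intertwines the dinatural family $(\id_v\oti\id_{x_1}\oti\id_{x_1^\vee}\oti(\imath^L)^{\otimes(g-1)})_*$ with the family $(\id_v\oti\imath^L\oti \id_{L^{\otimes(g-1)}})_*$ in a way compatible with the tensor structure, so that the composite dinatural family on the $g$-fold coend coincides with the claimed $(\id_v\oti(\imath^L)^{\otimes g})_*$.
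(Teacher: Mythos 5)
Your proof is correct and takes essentially the same route as the paper: induction on $g$ with Proposition \ref{prop:intHom-vs-L} as the base case, using the Fubini theorem for left exact coends to peel off one variable at a time. The only divergence is in bookkeeping: the paper absorbs the factor $L$ produced at each step into the parameter, reducing the claim for $g$ with parameter $v$ to the claim for $g{-}1$ with parameter $v\oti L$, whereas you peel off $x_1$ on the outside and invoke the rigidity adjunction to handle the pair $x_1^{}\oti x_1^\vee$ sandwiched between $v$ and $L^{\otimes(g-1)}$ --- a point the paper's two-line proof leaves implicit.
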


\begin{proof}
We are free to replace the object $v \,{\in}\, \mathcal C$ in the proof of Proposition 
\ref{prop:intHom-vs-L} by $v \oti L$. We can therefore invoke Proposition 
\ref{prop:intHom-vs-L} together with the Fubini theorem, as
applied to left exact coends, to reduce the claim for some given $g$
with parameter object $v$ to the same claim for $g{-}1$ with parameter  
$v \oti L$. The assertion thus follows by induction.
\end{proof}

The Fubini theorem generalizes to situations in which two different types of coends,
one of them defined in terms of a suitable subcategory, are involved. In the context
of the construction of conformal blocks this happens when one describes pair-of-pants
decompositions of surfaces of higher genus. In that case
we deal with a functor category, and the relevant subcategory is the one of left exact 
functors. We formulate the statement directly for this situation.

\begin{prop}\label{thm:lyub11:B.2.} {\rm \cite[Thm.\,B.2]{lyub11}} \\
Let $\mathcal C_1, \mathcal C_2, \mathcal C_3$ and $\mathcal A$ be $\Bbbk$-linear abelian 
categories and $F\colon \mathcal C_2^{\mathrm{op}} \Times \mathcal C_1^{\mathrm{op}} \Times \mathcal C_1^{} \Times
\mathcal C_2^{} \Times \mathcal C_3^{} \,{\to}\, \mathcal A$ be a left exact $\Bbbk$-li\-near functor.
Assume that there exists a coend with parameters
  $$ 
  G(u,v,w) := \int^{x\in\mathcal C_1}\!\! F(u,x,x,v,w) \,,
  $$ 
with dinatural transformation $j_x(u,v,w)\colon F(u,x,x,v,w) \,{\to}\, G(u,v,w)$. Regard 
$G \,{=}\, G(?,\reflectbox{$?$},w)$ as a functor $G\colon \mathcal C_2^{\mathrm{op}} \Times
\mathcal C_2^{} \,{\to}\, \mathcal F\hspace*{-2pt}un(\mathcal C_3,\mathcal A)$ and assume
that as such it possesses a left exact coend $H \,{\in}\,
\mathcal Lex(\mathcal C_3,\mathcal A)$, with dinatural transformation $$ 
  h_u(w):\quad G(u,u,w) \to H(w) \equiv \oint^{u\in\mathcal C_2}\! G(u,u,w) \,.
  $$ 
Then the composition
  $$ 
  i_{x,u}(w):\quad F(u,x,x,v,w) \xrightarrow{~j_x(u,u,w)~} G(u,u,w)
  \xrightarrow{~h_u(w)~} H(w)
  $$ 
constitutes the dinatural transformation of a left exact coend $\oint^{x,u}\! F(u,x,x,v,w)$ 
of $F$, with $F$ regarded as a functor $(\mathcal C_1{\btimes}\,\mathcal C_2)^{\mathrm{op}}_{}
\Times (\mathcal C_1{\btimes}\,\mathcal C_2) \,{\xrightarrow{~~\,}}\,
\mathcal Lex(\mathcal C_3,\mathcal A)$.
In short, there is a {\rm(}necessarily unique{\rm)} iso\-mor\-phism
  $$  
  \oint^{x\btimes u\,\in\,\mathcal C_1\btimes\mathcal C_2}\!\!\! F(u,x,x,u,w)
  \,\xrightarrow{~\,\cong\,~}\,
  \oint^{u\in\mathcal C_2}\!\!\! \int^{x\in\mathcal C_1}\!\! F(u,x,x,u,w)
  $$  
of multiple coends. 
\end{prop}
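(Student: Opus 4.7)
The plan is to establish that the composite family $i_{x,u}(w) := h_u(w) \circ j_x(u,u,w)$ is both dinatural in the combined variable $(x,u)\in \mathcal C_1\btimes\mathcal C_2$ and satisfies the required universal property with respect to dinatural transformations to left exact functors $\mathcal C_3 \,{\to}\, \mathcal A$. Dinaturality splits into two cases. For a morphism $f\colon x\to x'$ in $\mathcal C_1$ with $u$ fixed, the equality $i_{x',u}(w) \circ F(\id,\id,f,\id,\id) \,{=}\, i_{x,u}(w) \circ F(\id,f,\id,\id,\id)$ reduces, after postcomposition with $h_u(w)$, to the dinaturality of $j_\bullet(u,u,w)$ for the coend $G(u,u,w)$. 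For a morphism $g\colon u\to u'$ in $\mathcal C_2$ with $x$ fixed, one uses naturality of $j_x$ in its first and fourth arguments to push $g$ through to $G$, and then invokes the dinaturality of $h$ for the left exact coend $H$.

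For the universal property, let $K\colon\mathcal C_3\to\mathcal A$ be a left exact functor, and let $\phi_{x,u}(w)\colon F(u,x,x,u,w) \,{\to}\, K(w)$ be a dinatural family in $(x,u)$, natural in $w$. I would proceed in two stages. First, fix $u$. Then $\phi_{x,u}(w)$ is dinatural in $x\in\mathcal C_1$ and natural in $w$, so by the universal property of the ordinary coend $G(u,u,w) \,{=}\, \int^x F(u,x,x,u,w)$ (which exists by assumption) there is a unique family of morphisms $\psi_u(w)\colon G(u,u,w)\,{\to}\, K(w)$, natural in $w$, satisfying $\psi_u(w) \circ j_x(u,u,w) \,{=}\, \phi_{x,u}(w)$. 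Second, I must check that $\psi_u(w)$ is dinatural in $u\in\mathcal C_2$: for any $g\colon u\to u'$, both $\psi_u(w)\circ G(g,\id_u,w)$ and $\psi_{u'}(w)\circ G(\id_{u'},g,w)$ precomposed with $j_x(u',u,w)$ reduce, via the dinaturality of $\phi$ in $u$ and the naturality of $j$, to the same morphism out of $F(u',x,x,u,w)$; then the universal property of the coend $G(u',u,w)$ forces the two to coincide. With dinaturality of $\psi_\bullet(w)$ in $u$ and naturality in $w$ established, we may view $\psi$ as a dinatural transformation from $G(?,?,w)$ to the left exact functor $K$, and the universal property of the left exact coend $H$ yields a unique natural transformation $\eta(w)\colon H(w)\,{\to}\, K(w)$ with $\eta(w)\circ h_u(w)\,{=}\,\psi_u(w)$. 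The composite then factors $\phi$ through $i$, and uniqueness follows by reversing the two universal-property arguments.

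The main obstacle I expect is the verification that $\psi_u(w)$ is dinatural in $u$, since the coend over $\mathcal C_1$ has already been taken and we can only access $G(u,u,w)$ through its defining dinatural family $j_x$. The standard device is to use the universal property of $G$ as a \emph{uniqueness} principle: one identifies two morphisms out of $G$ by checking that their precompositions with every $j_x$ agree, and this in turn is the content of the dinaturality of $\phi$ in $u$ together with the naturality of $F$. A related subtlety is that the coend $\int^{x\btimes u}$ is formed over the Deligne product $\mathcal C_1\btimes\mathcal C_2$ rather than over the cartesian product, but since $F$ is assumed $\Bbbk$-linear and left exact in each variable, the standard arguments from the proof of the classical Fubini Proposition \ref{prop:fubini} go through in this setting, yielding the isomorphism $\oint^{x\btimes u}\! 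F(u,x,x,u,w) \,{\cong}\, \oint^{u}\!\int^{x}\! F(u,x,x,u,w)$ as claimed.
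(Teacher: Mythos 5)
Your proposal is correct and takes essentially the same approach as the paper's proof: dinaturality of the composite family is obtained by combining the dinaturality of $j$ and $h$ with the naturality of $j$ in its parameters, and the universal property is established by factoring a given dinatural family first through $G(u,u,w)$ via the ordinary coend's universal property (checking dinaturality in $u$ by testing precompositions with the $j_x$) and then through $H(w)$ via the universal property of the left exact coend, with uniqueness following by reversing the two steps. Your cautionary aside about the Deligne product is moot, since the product symbol in the proposition is defined in this paper to be the ordinary cartesian product.
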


\begin{proof}
For any $f \,{\in}\, \Hom_{\mathcal C_1}(y,x)$ and $g \,{\in}\, \Hom_{\mathcal C_2}(v,u)$ and 
any $w \,{\in}\, \mathcal C_3$ consider the diagram
  $$ 
  \begin{tikzcd}
      F(u,x,y,v,w) \ar{rr}{F(u,f,y,v,w)} \ar{d}{F(u,x,f,v,w)}
  &~& F(u,y,y,v,w) \ar{rr}{F(g,y,y,v,w)} \ar{d}{j_y(u,v,w)}
  &~& F(v,y,y,v,w) \ar{d}{j_y(v,v,w)}
  \\
      F(u,x,x,v,w) \ar{rr}{~~j_x(u,v,w)}     \ar{d}{F(u,y,x,g,w)}
  &~& G(u,v,w)     \ar{rr}{G(g,v,w)}     \ar{d}{G(u,g,w)}
  &~& G(v,v,w)     \ar{d}{h_v(w)}
  \\
      F(u,x,x,u,w) \ar{rr}{~~j_x(u,u,w)}   
  &~& G(u,u,w)     \ar{rr}{~~h_u(w)}
  &~& H(w)
  \end{tikzcd}
  $$ 
in which we use the shorthand $u$ for $\id_u$ etc. The top left and bottom right squares
of this diagram commute by the dinaturality of the families $j$ and $h$, respectively, while
the off-diagonal squares commute because of naturality of $j(u,v,w)$ in the parameters $u$
and $v$. Hence
the outer square commutes, thus establishing the dinaturalness of the family $i_{x,u}$.
 \\
Assume now that $k$ is any dinatural transformation from the functor $F$ to a left exact functor
$K \,{\in}\, \mathcal Lex(\mathcal C_3,\mathcal A)$, meaning that also the outer square in the diagram
  $$ 
  \hspace*{-1.4em} \begin{array}{l}
  \begin{tikzcd}[column sep=5.6ex]
  ~~~~~~F(u,x,y,v,w) \ar{rr}{~~F(u,f,y,v,w)} \ar{d}{F(u,x,f,v,w)}
  &~& F(u,y,y,v,w)   \ar{rr}{F(g,y,y,v,w)}   \ar{d}{j_y(u,v,w)}
     &~& F(v,y,y,v,w)~~\ar{d}[left]{j_y(v,v,w)}
     \ar[in=30, out=353]{ddd}{k_{v,y}(w)} 
  \\
  ~~~~~~F(u,x,x,v,w) \ar{rr}{~~~j_x(u,v,w)} \ar{d}{F(u,x,x,g,w)}
  &~& G(u,v,w)       \ar{rr}{G(g,v,w)}      \ar{d}{G(u,g,w)}
  &~& G(v,v,w) ~~~~  \ar{d}[left]{h_v(w)}
  \\
  ~~~~~~F(u,x,x,u,w) \ar{rr}{~~~j_x(u,u,w)}
     \ar[in=188, out=320]{rrrrd}{k_{u,x}(w)} 
  &~& G(u,u,w)       \ar{rr}{~~h_u(w)}
  &~& H(w) ~~
  \\
     &~&~&~& K(w)
  \end{tikzcd}
  \\[-27pt]~ \end{array}
  $$ 
commutes. Temporarily restricting attention to the case $v \,{=}\, u$ and 
$g \,{=}\, \id_u$ and suppressing the part involving $H(w)$, this diagram collapses to 
  $$ 
  \begin{tikzcd}[row sep=4.4ex]
  ~~~~~~F(u,x,y,u,w) \ar{rr}{~~F(u,f,y,u,w)} \ar{d}[left]{F(u,x,f,u,w)}
  && F(u,y,y,u,w) \ar{d}[left]{j_y(u,u,w)} \ar[bend left]{rdd}[xshift=-3pt]{k_{u,y}(w)} & ~
  \\
  ~~~~~~F(u,x,x,u,w) \ar{rr}{j_x(u,u,w)}
  \ar[bend right=14]{rrrd}[xshift=-26pt,yshift=5pt]{k_{u,x}(w)}
  && G(u,u,w)  ~ & ~
  \\
  &&& K(w)
  \end{tikzcd}
  $$ 
By the universal property of the coend $G$ there then exists a unique morphism
$\varphi_u(w)$ such that also the triangles in the diagram 
  $$ 
  \begin{tikzcd}[row sep=4.4ex]
  ~~~~~~F(u,x,y,u,w) \ar{rr}{~~F(u,f,y,u,w)} \ar{d}[left]{F(u,x,f,u,w)}
  && F(u,y,y,u,w) \ar{d}[left]{j_y(u,u,w)} \ar[bend left]{rdd}[xshift=-3pt]{k_{u,y}(w)} & ~
  \\
  ~~~~~~F(u,x,x,u,w) \ar{rr}{j_x(u,u,w)}
  \ar[bend right=14]{rrrd}[xshift=-26pt,yshift=5pt]{k_{u,x}(w)}
  && G(u,u,w) \ar[dashed]{rd}[xshift=-3pt]{\varphi_u(w)} ~ & ~
  \\
  &&& K(w)
  \end{tikzcd}
  $$ 
commute. Returning to the general case we thus obtain a commuting diagram
  $$  
  \hspace*{-1.4em} \begin{array}{l}
  \begin{tikzcd}
  ~~~~~~F(u,x,y,v,w) \ar{rr}{~~F(u,f,y,v,w)} \ar{d}{F(u,x,f,v,w)}
  &~& F(u,y,y,v,w)   \ar{rr}{F(g,y,y,v,w)}    \ar{d}{j_y(u;v,w)}
     &~& F(v,y,y,v,w)~ \ar{d}[left]{j_y(v,v,w)}
     \ar[in=5, out=343]{ddd}{k_{v,y}(w)} 
  \\
  ~~~~~~F(u,x,x,v,w) \ar{rr}{~~~j_x(u;v,w)} \ar{d}{F(u,x,x,g,w)}
  &~& G(u;v,w) ~~    \ar{rr}{G(g;v,w)}      \ar{d}{G(u;g,w)}
     &~& G(v;v,w) ~~ \ar{d}[left]{h_v(w)}
     \ar[in=70, out=333]{dd}{\varphi_v(w)}
  \\
  ~~~~~~F(u,x,x,u,w) \ar{rr}{~~~j_x(u;u,w)}
     \ar[in=188, out=320]{rrrrd}{k_{u,x}(w)} 
     &~& G(u;u,w) ~ \ar{rr}{~~h_u(w)}
     \ar[in=155, out=320]{rrd} [swap]{\varphi_u(w)}
  &~& H(w) ~~
  \\
     &~&~&~& K(w)
  \end{tikzcd}
  \\[-19pt]~ \end{array}
  $$ 
for any $f \,{\in}\, \Hom_{\mathcal C_1}(y,x)$, $g \,{\in}\, \Hom_{\mathcal C_2}(v,u)$ and $w \,{\in}\, \mathcal C_3$.
Next we invoke the universal property of $H$ as a left exact coend of $G$ (i.e., that the 
family $h$ is universal among all dinatural transformations from $G$ to left exact functors)
to conclude that there is a unique morphism $\psi(w)$ such that also the triangles in
  $$ 
  \begin{tikzcd}[column sep=5.6ex]
  G(u;v,w)   \ar{rr}{G(g;v,w)}  \ar{d}[xshift=-40pt]{G(u;g,w)}
  && G(v;v,w) \ar[bend left]{rdd}[xshift=-3pt]{\varphi_v(w)}  \ar{d}[left]{h_v(w)}
  \\
  G(u;u,w) \ar{rr}{h_u(w)} \ar[bend right=14]{rrrd}[xshift=-26pt,yshift=5pt]{\varphi_u(w)}
  && H(w)  \ar[dashed]{rd}[xshift=-3pt]{\psi(w)}
  \\
  &&& K(w)
  \end{tikzcd}
  $$ 
commute. Taken together, it follows that, given commutativity of the outer and inner squares of
  $$ 
  \begin{tikzcd}
  F(u,x,y,v,w)  \ar{rrrr}{F(g,f,y,v,w)~} \ar{d}[left]{F(u,x,f,g,w)}
  &&&& F(v,y,y,v,w) \ar[bend left]{rdd}[xshift=-2pt]{k_{v,y}(w)}
  \ar{d}[left, xshift=1pt]{h_v(w)\,\circ j_y(v;v,w)}
  \\
  F(u,x,x,u,w) \ar{rrrr}[yshift=-14pt]{~~~h_u(w)\,\circ\,j_x(u;u,w)}
  \ar[bend right=10]{rrrrrd}[xshift=-22pt,yshift=2pt]{k_{u,x}(w)}
  &&&& H(w)  \ar[dashed]{rd}{}
  \\
  &&&&& K(w)
  \end{tikzcd}
  $$ 
for all $f \,{\in}\, \Hom_{\mathcal C_1}(y,x)$, $g \,{\in}\, \Hom_{\mathcal C_2}(v,u)$ and 
$w \,{\in}\, \mathcal C_3$, there is a unique morphism from $H(w)$ to $K(w)$, namely
$\psi(w)$, that also makes the triangles in the diagram commute. This shows the required
universal property of $H(w)$ as a left exact coend of $F$ -- i.e., that the family 
$i \,{=}\, j \,{\circ}\, h$ is universal among all dinatural transformations from $F$ 
to left exact functors -- and thus completes the proof.
\end{proof}

Interchanging the roles of $\mathcal C_1$ and $\mathcal C_2$ in Proposition \ref{thm:lyub11:B.2.}
one arrives at

\begin{coro}\label{coro:gzFubini}
Under the assumptions of Proposition {\rm \ref{thm:lyub11:B.2.}} there is an isomorphism
  \begin{equation}
  \label{fubiniX2}
  \oint^{u\in\mathcal C_2}\!\!\! \int^{x\in\mathcal C_1}\!\! F(u,x,x,u,w)
  \,\xrightarrow{~\,\cong\,~}\, \oint^{x\in\mathcal C_1}\!\!\! \int^{u\in\mathcal C_2}\!\! F(u,x,x,u,w)
  \end{equation}
of iterated coends.
\end{coro}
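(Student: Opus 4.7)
The plan is to apply Proposition~\ref{thm:lyub11:B.2.} twice and then combine the two outcomes via the symmetry of the Deligne product. Both iterated left exact coends appearing in \eqref{fubiniX2} should compute one and the same universal object — a left exact coend over a product of categories — so they must be canonically isomorphic.

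First I would apply Proposition~\ref{thm:lyub11:B.2.} directly to $F$; this identifies the left hand side of \eqref{fubiniX2} with the left exact coend $\oint^{x\btimes u\,\in\,\mathcal C_1\btimes\mathcal C_2}\! F(u,x,x,u,w)$ over the Deligne product $\mathcal C_1\btimes\mathcal C_2$, with the dinatural family given by the composition $h_u(w) \,{\circ}\, j_x(u,u,w)$ constructed in the proof of that proposition. Next I would re-run the same proposition with the roles of $\mathcal C_1$ and $\mathcal C_2$ interchanged — that is, now treating $u$ as the variable that is integrated out first and $x$ as the outer parameter — so as to identify the right hand side of \eqref{fubiniX2} with $\oint^{u\btimes x\,\in\,\mathcal C_2\btimes\mathcal C_1}\! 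F(u,x,x,u,w)$. Finally I would invoke the canonical symmetry $\mathcal C_1\btimes\mathcal C_2 \,{\simeq}\, \mathcal C_2\btimes\mathcal C_1$ of the Deligne product, which identifies these two double coends on the nose. Composing the first isomorphism with the inverse of the second then yields the desired isomorphism \eqref{fubiniX2}, with uniqueness following from the universal property of a left exact coend (hence from the uniqueness clause already contained in Proposition~\ref{thm:lyub11:B.2.}).

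The step I expect to require the most care is the verification that the hypotheses of Proposition~\ref{thm:lyub11:B.2.} remain valid under this relabeling. Concretely, one must check that the inner coend $\int^{u\in\mathcal C_2}\! F(u,x,x,u,w)$ exists as a functor of the remaining arguments, and that, regarded as a functor $\mathcal C_1^{\mathrm{op}}\Times\mathcal C_1 \,{\to}\, \mathcal F\hspace*{-2pt}un(\mathcal C_3,\mathcal A)$, it admits a left exact coend. These are, however, precisely the existence conditions tacitly built into writing down the right hand side of \eqref{fubiniX2}, so they should be read as part of the standing assumptions of the corollary rather than as an additional input. Once they are granted, the three-step argument above is forced by the universal properties and no further calculation is needed.
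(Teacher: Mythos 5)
Your proposal is correct and matches the paper's own argument: the corollary is obtained precisely by applying Proposition~\ref{thm:lyub11:B.2.} once as stated and once with the roles of $\mathcal C_1$ and $\mathcal C_2$ interchanged, so that both iterated coends are identified with the left exact coend over the product category and are therefore uniquely isomorphic compatibly with their coend structures. Your remark that the existence of the inner coend $\int^{u\in\mathcal C_2}\!F(u,x,x,u,w)$ and of the outer left exact coend over $\mathcal C_1$ must be read into the hypotheses is also the (tacit) reading the paper takes.
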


Or, expressed at greater length: The two objects on the left and right hand sides of 
\eqref{fubiniX2} are isomorphic, and among all isomorphisms between them there is a 
unique one that is compatible with their respective coend structures.

\vskip 3,6em

\noindent
{\sc Acknowledgements:}\\[.3em]
We are grateful to Tobias Ohrmann for helpful comments on the manuscript.
 \\
JF is supported by VR under project no.\ 621-2013-4207.
CS is partially supported by the Collaborative Research Centre 676 ``Particles,
Strings and the Early Universe - the Structure of Matter and Space-Time'', by the RTG 1670
``Mathematics inspired by String theory and Quantum Field Theory'' and by the DFG Priority
Programme 1388 ``Representation Theory''.


\newpage

 \newcommand\wb{\,\linebreak[0]} \def\wB {$\,$\wb}
 \newcommand\Bi[2]    {\bibitem[#2]{#1}}
 \newcommand\Epub[2]  {{\em #2}, {\tt #1}}
 \newcommand\inBo[8]  {{\em #8}, in:\ {\em #1}, {#2}\ ({#3}, {#4} {#5}), p.\ {#6--#7} }
 \newcommand\inBO[9]  {{\em #9}, in:\ {\em #1}, {#2}\ ({#3}, {#4} {#5}), p.\ {#6--#7} {\tt [#8]}}
 \newcommand\J[7]     {{\em #7}, {#1} {#2} ({#3}) {#4--#5} {{\tt [#6]}}}
 \newcommand\JO[6]    {{\em #6}, {#1} {#2} ({#3}) {#4--#5} }
 \newcommand\JP[7]    {{\em #7}, {#1} ({#3}) {{\tt [#6]}}}
 \newcommand\BOOK[4]  {{\em #1\/} ({#2}, {#3} {#4})}
 \newcommand\PhD[2]   {{\em #2}, Ph.D.\ thesis #1}
 \newcommand\Prep[2]  {{\em #2}, preprint {\tt #1}}

\def\adma  {Adv.\wb Math.}
\def\coma  {Con\-temp.\wb Math.}
\def\comp  {Com\-mun.\wb Math.\wb Phys.}
\def\jktr  {J.\wB Knot\wB Theory\wB and\wB its\wB Ramif.}
\def\joal  {J.\wB Al\-ge\-bra}
\def\jopa  {J.\wb Phys.\ A}
\def\jpaa  {J.\wB Pure\wB Appl.\wb Alg.}
\def\nupb  {Nucl.\wb Phys.\ B}
\def\tams  {Trans.\wb Amer.\wb Math.\wb Soc.}

\small


\end{document}